\newtheorem{theorem}{Theorem}[section]
\newtheorem{corollary}[theorem]{Corollary}
\newtheorem{proposition}[theorem]{Proposition}
\theoremstyle{definition}
\newtheorem{conjecture}[theorem]{Conjecture}
\newtheorem{question}[theorem]{Question}
\newtheorem{example}[theorem]{Example}
\newtheorem{remark}[theorem]{Remark}
\numberwithin{equation}{subsection}
\newcommand{\Aut}{\operatorname{Aut}}
\newcommand{\Conj}{\operatorname{Conj}}
\newcommand{\Core}{\operatorname{Core}}
\newcommand{\Inn}{\operatorname{Inn}}
\newcommand{\T}{\operatorname{T}}
\newcommand{\R}{\operatorname{R}}
\newcommand{\Z}{\operatorname{Z}}
\newcommand{\id}{\mathrm{id}}
\begin{document}
\title{Quandle rings}
\author{Valeriy G. Bardakov}
\author{Inder Bir S. Passi}
\author{Mahender Singh}

\date{\today}
\address{Sobolev Institute of Mathematics and Novosibirsk State University, Novosibirsk 630090, Russia.}
\address{Novosibirsk State Agrarian University, Dobrolyubova street, 160, Novosibirsk, 630039, Russia.}
\email{bardakov@math.nsc.ru}

\address{Center for Advanced Study in Mathematics, Panjab University, Chandigarh 160014, India.}
\address{Department of Mathematical Sciences, Indian Institute of Science Education and Research (IISER) Mohali, Sector 81,  S. A. S. Nagar, P. O. Manauli, Punjab 140306, India.}
\email{passi@iisermohali.ac.in}

\address{Department of Mathematical Sciences, Indian Institute of Science Education and Research (IISER) Mohali, Sector 81,  S. A. S. Nagar, P. O. Manauli, Punjab 140306, India.}
\email{mahender@iisermohali.ac.in}

\subjclass[2010]{Primary 20N02; Secondary 20B25, 57M27, 17D99, 16S34}
\keywords{Connected quandle; knot quandle; latin quandle; power associativity; quandle ring; quotient quandle; rack ring; unit group}

\begin{abstract}
In this paper, a theory of quandle rings is proposed for quandles analogous to the classical theory of group rings for groups, and interconnections between quandles and associated quandle rings are explored. 
\end{abstract}
\maketitle

\section{Introduction}
A quandle is a set with a binary operation that satisfies three axioms motivated by the three Reidemeister moves of diagrams of knots in the Euclidean space $\mathbb{R}^3$.  Ignoring the first Reidemeister move gives rise to a weaker structure called a rack. These algebraic objects were introduced independently by Matveev \cite{Matveev} and Joyce \cite{Joyce-thesis, Joyce}. They associated a quandle to each tame knot in $\mathbb{R}^3$ and showed that it is a complete invariant up to orientation. Over the years, racks, quandles and their analogues have been investigated to construct  more  computable invariants for knots and links, as well as purely algebraic objects, which is also the take of this paper.
\par

Historically, special kind of quandles, now called involutary quandles, had already appeared in the work of Takasaki \cite{Takasaki} on finite geometry. In an unpublished 1959 correspondence, Wraith and Conway \cite{Carter} discussed groups acting on themselves via conjugation yielding quandles now called conjugation quandles. In \cite{Loos1, Loos2}, Loos defined a differentiable reflection space to be a differentiable manifold with a differentiable binary operation satisfying the quandle axioms. For example, if $G$ is a connected Lie group with an involutive automorphism $\varphi$ and $H$ a subgroup of the fixed-point subgroup containing the identity component, then the symmetric space $G/H$ acquires the structure of differentiable reflection space with the binary operation $$g_1H *g_2H=\varphi(g_1 g_2^{-1})g_2H.$$ Conversely, any differentiable reflexion space can be obtained from a symmetric space \cite{Loos1, Loos2}. Results of this kind have led to transfer of ideas from the theory of Riemannian symmetric spaces to the theory of quandles \cite{Ishihara}. Interestingly, quandles appear naturally in many other contexts. Joyce \cite{Joyce-thesis} related quandles to the theory of loops by observing that the core of a Moufang loop is an involutary quandle. Further, quandles have been related to pointed Hopf algebras \cite{Andruskiewitsch}, set-theoretic solutions to the Yang-Baxter equations and Yetter-Drinfeld Modules  \cite{Eisermann}, categorical groups and other notions of categorification \cite{CCES1, CCES2}. We refer the reader to the survey articles \cite{Carter, Kamada, Nelson} for more on the historical development of the subject and its relationships with other areas of mathematics.
\par

Naturally, quandles and racks have also received a great deal of attention as purely algebraic objects. As is the case with any algebraic system, a (co)homology theory for quandles and racks has been developed in \cite{Carter2, Fenn1, Fenn2, Nosaka}, which, as applications has led to stronger invariants for knots and links. An explicit description of abelian group objects in the category of quandles and racks has been given in \cite{Jackson}, leading to the construction of an abelian category of modules over these objects. Automorphisms of quandles have been investigated in much detail. In \cite{Ho-Nelson},  automorphism groups of quandles of order less than 6 were determined. This investigation was carried forward in \cite{Elhamdadi}, wherein the automorphism group of the dihedral quandle $\R_n$ was shown to be isomorphic to the group of invertible affine transformations of $\mathbb{Z}_n$, and the inner automorphism group of $\R_n$ was shown to be isomorphic to some dihedral group. These descriptions of automorphism groups were used to determine, up to isomorphism,  all quandles of order less than 10. In \cite{Hou}, a description of the automorphism group of Alexander quandles was determined, and an explicit formula for the order of the automorphism group was given for finite case. In \cite{Bardakov}, some structural results are obtained for the group of automorphisms and inner automorphisms of generalised Alexander quandles of finite abelian groups with respect to fixed-point free automorphisms. This work was extended in \cite{BarTimSin}, wherein several interesting subgroups of automorphism groups of conjugation quandles of groups are determined. Further, necessary and sufficient conditions are found for these subgroups to coincide with the full group of automorphisms.
\par

The purpose of the present paper is to develop a theory of quandle rings analogous to the classical theory of group rings. Attempt has been made to state the results making distinction between quandles and racks whenever possible. The main results of the paper are Theorems \ref{deltasqzero},  \ref{ideal-to-subquandle}, \ref{id-subq}, \ref{dictionary}, \ref{split-short-seq},  \ref{unit-trivial-rack} and Propositions  \ref{ass-graded-1}, \ref{ass-graded-2}, \ref{ass-graded-3}, \ref{power-ass-1} and \ref{power-ass-2}. Though our approach and motivation is purely algebraic, the results might be of use in knot theory.
\par
The paper is organised as follows. In Section \ref{section2}, we recall some basic definitions and examples from the theory of racks and quandles. 
\par

In Section \ref{section3}, the main objects of our study are introduced. Given a quandle $X$ and an associative ring $R$ (not necessarily with unity), the quandle ring $R[X]$ of $X$ with coefficients in $R$ is defined as the set of formal finite $R$-linear combinations of elements of $X$. Rack rings are defined analogously. In the first main result, Theorem \ref{deltasqzero}, it is proved that a quandle $X$ is trivial if and only if $\Delta^2_R(X) = \{0\}$, where $\Delta_R(X)$ is the augmentation ideal of $R[X]$. While this characterisation holds for quandles, an example is given to show that it fails for racks.
\par

In Section \ref{section4}, relationships between subquandles of the given quandle and ideals of the associated quandle ring are discussed. Given a quandle $X$, an element $x_0 \in X$ and a two sided ideal $I$ of $R[X]$, a subquandle $X_{I, x_0}$ of $X$ is defined. It is shown in Theorem \ref{ideal-to-subquandle} that  these subquandles give a partition of the quandle $X$. Further, relationships among these subquandles are explored. In Theorem \ref{id-subq}, it is shown that if $X$ is a finite involutary quandle, $I$ a two-sided ideal of $R[X]$ and $x_0, y_0 \in X$ two elements in the same orbit under the action of $\Inn(X)$, then $X_{I, x_0} \cong X_{I, y_0}$. Given a surjective quandle homomorphism $f : X \to Z$, in Theorem \ref{quotient-quandle}, it is shown that $Z$ is isomorphic as a quandle to a natural quotient of $X$. The construction of the quotient quandle leads to Theorem \ref{dictionary}, which gives a correspondence between subquandles of the given quandle and ideals of the quandle ring.
\par
Since the quandle ring $R[X]$ does not have unity, it is desirable to embed $R[X]$ into a ring with unity. In Section \ref{section5}, the extended quandle ring $R^\circ[X]$ is introduced, which is a ring with unity containing the ring $R[X]$ as a subring. In Theorem \ref{split-short-seq}, a short exact sequence relating certain subgroups of a group of units of $R^\circ[X]$ is derived. Further, in Theorem \ref{unit-trivial-rack}, the structure of unit groups of $R^\circ[X]$ for trivial quandles $X$ is described.
\par
Given a quandle $X$ and a ring $R$,  the direct sum
$$
\mathcal{X}_R(X) := \sum_{i\geq 0} \Delta^i_R(X) / \Delta^{i+1}_R(X)
$$
of $R$-modules $\Delta^i_R(X) / \Delta^{i+1}_R(X)$ becomes a graded ring, called the associated graded ring of $R[X]$. In Section \ref{section6}, the quotients $\Delta^i_R(X) / \Delta^{i+1}_R(X)$ are investigated for dihedral quandles, and some structural results are obtained in Propositions \ref{ass-graded-1}, \ref{ass-graded-2} and \ref{ass-graded-3}.
\par
Finally, in Section \ref{section7}, we investigate weaker forms of associativity in quandle rings.  A ring is called power-associative if every element of the ring generates an associative subring. In Propositions \ref{power-ass-1} and \ref{power-ass-2}, we prove that quandle rings of dihedral quandles are not power-associative in general.
\bigskip

\section{Preliminaries}\label{section2}
A {\it rack} is a non-empty set $X$ with a binary operation $(x,y) \mapsto x * y$ satisfying the following axioms:
\begin{enumerate}
\item[(R1)] For any $x,y \in X$ there exists a unique $z \in X$ such that $x=z*y$;
\item[(R2)] $(x*y)*z=(x*z) * (y*z)$ for all $x,y,z \in X$.
\end{enumerate}

A rack is called a {\it quandle} if the following additional axiom is satisfed:
\begin{enumerate}
\item[(Q1)] $x*x=x$ for all $x \in X$.
\end{enumerate}
The axioms (R1), (R1) and (Q1) are collectively called quandle axioms. Besides knot quandles associated to knots, many interesting examples of quandles come from groups. Throughout the paper, we write arbitrary groups multiplicatively and abelian groups additively.

\begin{itemize}
\item If $G$ is a group, then the set $G$ equipped with the binary operation $a*b= b^{-1} a b$ gives a quandle structure on $G$, called the {\it conjugation quandle}, and denoted by $\Conj(G)$.
\item If $A$ is an additive abelian group, then the set $A$ equipped with the binary operation $a*b= 2b-a$ gives a quandle structure on $A$, denoted by $T(A)$ and called the Takasaki quandle of $A$. For $A= \mathbb{Z}/n \mathbb{Z}$,  it is called the {\it dihedral quandle}, and is denoted by $\R_n$.
\item If $G$ is a group and we take the binary operation $a*b= b a^{-1} b$, then we get the {\it core quandle}, denoted as $\Core(G)$. In particular, if $G$ is additive abelian, then $\Core(G)$ is the Takasaki quandle.
\item Let $A$ be an additive abelian group and $t \in \Aut(A)$. Then the set $A$ equipped with the binary operation $a* b=ta+(\id_A-t)b$ is a quandle called the {\it Alexander quandle} of $A$ with respect to $t$. Notice that, if $t =-\id_A$, then $a*b=2b-a$. Thus, in this case, the Alexander quandle of $A$ is the Takasaki quandle $T(A)$.
\item The preceding example can be generalised. Let $G$ be a group and $\varphi \in \Aut(G)$. Then the set $G$ equipped with the binary operation $a*b =\varphi(ab^{-1})b$ gives a quandle structure on $G$, called the generalised Alexander quandle of $G$ with respect to $\varphi$.
\item Let $n \ge 2$ and $X= \{a_1, a_2, \dots, a_n\}$ be a set with the binary operation given by $a_i*a_j=a_{n-i+1}$. Then $X$ is a rack which is not a quandle.
\end{itemize}
\bigskip

A quandle or rack $X$ is called {\it trivial} if $x*y=x$ for all $x, y \in X$. Obviously, a trivial rack is a trivial quandle. Unlike groups, a trivial quandle can contain arbitrary number of elements. We denote the $n$-element trivial quandle by $\T_n$.

Notice that, the rack axioms are equivalent to saying that for each $x \in X$, the map $S_x: X \to X$ given by $$S_x(y)=y*x$$ is an automorphism of $X$. Further,  in case of quandles, the axiom $x*x=x$ is equivalent to saying that $S_x$ fixes $x$ for each $x \in X$. Such an automorphism is called an {\it inner automorphism} of $X$, and the group generated by all such automorphisms is denoted by $\Inn(X)$. A quandle $X$ is called {\it involutary} if $S_x^2 = \id_X$ for each $x \in X$. For example, all Takasaki quandles are involutary. 

A {\it loop} is a set $X$ with a binary operation (usually written multiplicatively) and an identity element such that for each $a,b \in X$, there exist unique elements $x,y \in X$ such that both $ax = b$ and $ya = b$. A {\it Moufang loop} is a loop $X$ satisfying the Moufang identity $$(xy)(zx) = (x(yz))x$$ for all $x, y, z \in X$. Unlike groups, Moufang loops need not be associative. In fact, an associative Moufang loop is a group. The {\it core of a Moufang loop} $X$ is the algebraic structure with underlying set $X$ and binary operation $x*y:=yx^{-1}y$. Joyce \cite{Joyce-thesis} showed that the core of a Moufang loop is an involutary quandle. 

A subset $Y$ of a rack $X$ is called a {\it subrack} if $Y$ is a rack with respect to the underlying binary operation. Subquandles are defined analogously. It is easy to see that a subset of an involutary quandle is a subquandle if and only if it is closed under the binary operation.

The group $\Inn(X)$ acts on the quandle $X$ in the obvious way.  A quandle $X$ is said to be {\it connected} if the action of $\Inn(X)$ on $X$ is transitive. It is well-known that the Takasaki quandle of an abelian group $A$ is connected if and only if $A=2A$. In case of finite groups, this is equivalent to saying that $A$ has odd order. Connected quandles are of particular importance since knot quandles are connected. Furthermore, homomorphic images of connected quandles are connected. Therefore, the quandles that appear as homomorphic images of knot quandles (under quandle colorings) are necessarily connected. Thus, classification  of connected quandles is a major research theme, and has attracted a lot of attention \cite {Ehrman, Hulpke, Ishihara, Nelson1, Singh}.
\bigskip

\section{Quandle rings and rack rings}\label{section3}
From this section onwards, for convenience, we denote the multiplication in a quandle or a rack by $(a,b) \mapsto a.b$.

Let $X$ be a quandle and $R$ an associative ring (not necessarily with unity). Let $R[X]$ be the set of all formal finite $R$-linear combinations of elements of $X$, that is,
$$R[X]:=\Big\{ \sum_i\alpha_i x_i~|~\alpha_i \in R,~ x_i \in X \Big\}.$$
Then $R[X]$ is an additive abelian group in the usual way. Define a multiplication in $R[X]$ by setting $$\big(\sum_i\alpha_i x_i\big).\big(\sum_j\beta_j x_j\big):=\sum_{i,j}\alpha_i\beta_j (x_i.x_j).$$
Clearly, the multiplication is distributive with respect to addition from both left and right, and $R[X]$ forms a ring, which we call the {\it quandle ring} of $X$ with coefficients in the ring $R$. Since $X$ is non-associative, unless it is a trivial quandle, it follows that $R[X]$ is a non-associative ring, in general. Analogously, if $X$ is a rack, then we obtain the {\it rack ring} $R[X]$ of $X$ with coefficients in the ring $R$.

We will see that, unlike groups, the quandle ring structure of trivial quandles is quite interesting.

\begin{remark}
Observe that a quandle with a left multiplicative identity has only one element. For, let $e \in X$ be the left identity of $X$. Then $e.x=x$ for all $x \in X$. But, we have $x.x=x$
by  axiom (Q1). Now, by axiom (R1), we must have $e=x$ for all $x \in X$, and hence $X= \{ e \}$. Thus, $R[X]$ is a non-associative ring without unity, unless $X$ is a singleton.
\end{remark}

Analogous to group rings, we define the augmentation map
$$\varepsilon: R[X] \to R$$
 by setting $$\varepsilon \big(\sum_i\alpha_i x_i\big)= \sum_i\alpha_i .$$
Clearly, $\varepsilon$ is a surjective ring homomorphism, and $\Delta_R(X):= \ker(\varepsilon)$ is a two-sided ideal of $R[X]$, called the {\it augmentation ideal} of $R[X]$. Thus, we have
$$R[X]/\Delta_R(X) \cong R$$ as rings. In the case $R = \mathbb{Z}$, we denote the augmentation ideal simply by $\Delta(X)$. The following results is straightforward.

\begin{proposition}
Let $X$ be a rack and $R$ an associative ring. Then $\{x-y~|~x, y \in X \}$ is  a generating set for $\Delta_R(X)$ as an $R$-module. Further, if $x_0 \in X$ is a fixed element, then the set $\big\{x-x_0~|~x \in X \setminus \{ x_0\} \big\}$ is a basis for $\Delta_R(X)$ as an $R$-module.
\end{proposition}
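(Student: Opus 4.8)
The plan is to run the standard augmentation-ideal argument, exactly as one does for group rings, the only structural input being that $R[X]$ is, by its very construction, the free $R$-module on the set $X$; in particular, a formal combination $\sum_x \alpha_x x$ vanishes in $R[X]$ if and only if every coefficient $\alpha_x$ is zero, and distinct elements of $X$ are $R$-linearly independent. First I would record that each difference $x-y$ lies in $\Delta_R(X)$, since $\varepsilon(x-y)=1-1=0$, so the proposed sets are at least contained in the augmentation ideal.

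For the generating claim, take an arbitrary $\xi=\sum_i\alpha_i x_i\in\Delta_R(X)$, so that $\sum_i\alpha_i=0$. Fixing any $x_0\in X$ and using the augmentation condition to subtract a vanishing multiple of $x_0$, I would write
$$\xi=\sum_i\alpha_i x_i=\sum_i\alpha_i x_i-\Big(\sum_i\alpha_i\Big)x_0=\sum_i\alpha_i(x_i-x_0).$$
This exhibits $\xi$ as an $R$-linear combination of elements $x_i-x_0$, each of the form $x-y$, and hence shows that $\{x-y\mid x,y\in X\}$ generates $\Delta_R(X)$ as an $R$-module.

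For the basis claim, the displayed identity already shows that $\{x-x_0\mid x\in X\setminus\{x_0\}\}$ spans $\Delta_R(X)$, since the index with $x_i=x_0$ contributes a zero term that may be dropped. It then remains to establish linear independence: assuming $\sum_{x\neq x_0}\beta_x(x-x_0)=0$, I would rewrite this as
$$\sum_{x\neq x_0}\beta_x\,x-\Big(\sum_{x\neq x_0}\beta_x\Big)x_0=0$$
and invoke freeness of $R[X]$ on $X$, so that every coefficient vanishes; reading off the coefficient of each $x\neq x_0$ forces $\beta_x=0$. Thus the set is $R$-linearly independent, and therefore a basis. The argument presents no genuine obstacle; the single point deserving care is this independence step, which rests entirely on the fact that $R[X]$ is free as an $R$-module with basis $X$, and one should also keep in mind the convention (relevant only when $R$ is not assumed to have a unity) under which the symbols $x-y$ and $x-x_0$ are read as honest elements of $R[X]$.
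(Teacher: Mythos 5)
Your proof is correct and is exactly the standard argument the authors have in mind; the paper simply states the proposition as ``straightforward'' and omits the proof. The one point you flag---that when $R$ lacks a unity the symbols $x-y$ must be read via the formal-linear-combination construction of $R[X]$---is a genuine subtlety the paper also glosses over, and your handling of it is fine.
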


The following is an interesting observation.

\begin{proposition}
Let $X$ be a quandle and $R$ an associative ring. Then $x.y+y.x \equiv x+y~ \mod \Delta_R^2(X)$ for all $x, y \in X$.
\end{proposition}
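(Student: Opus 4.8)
The plan is to produce $x.y + y.x - (x+y)$ explicitly as a product of two elements of the augmentation ideal, so that it visibly lands in $\Delta_R^2(X)$. The natural candidate is the square of a single generator of $\Delta_R(X)$.

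First I would recall, from the preceding proposition, that for any two elements $x, y \in X$ the difference $x - y$ lies in $\Delta_R(X)$. Next I would compute the product $(x-y)(x-y)$ inside the quandle ring $R[X]$, expanding by distributivity and using the definition of multiplication on basis elements:
$$(x-y)(x-y) = x.x - x.y - y.x + y.y.$$
The key step is to invoke the quandle axiom (Q1), which gives $x.x = x$ and $y.y = y$ in $X$, and therefore in $R[X]$. Substituting these collapses the diagonal terms and yields
$$(x-y)(x-y) = x + y - x.y - y.x.$$

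Finally, since $x - y \in \Delta_R(X)$ and $\Delta_R(X)$ is a two-sided ideal, hence closed under multiplication, the element $(x-y)(x-y)$ belongs to $\Delta_R^2(X)$. By the displayed identity, this means $x + y - x.y - y.x \in \Delta_R^2(X)$, which is precisely the asserted congruence $x.y + y.x \equiv x + y \pmod{\Delta_R^2(X)}$.

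I do not anticipate any serious obstacle, as the argument reduces to a one-line expansion. The only genuinely load-bearing point is the appeal to (Q1) to replace $x.x$ and $y.y$ by $x$ and $y$; this is exactly where the statement uses that $X$ is a quandle rather than merely a rack, since for a general rack $x.x \neq x$ and the diagonal terms would not simplify.
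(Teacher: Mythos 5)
Your proof is correct and is essentially the paper's argument: the paper expands $(x-y+y)^2$ and reduces modulo $\Delta_R^2(X)$, which amounts to exactly your identity $(x-y)^2 = x + y - x.y - y.x$ together with $(x-y)^2 \in \Delta_R^2(X)$. The only cosmetic point is that $(x-y)^2$ lies in $\Delta_R^2(X)$ simply because it is a product of two elements of $\Delta_R(X)$, which is the definition of $\Delta_R^2(X)$; no appeal to the ideal property is needed.
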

\begin{proof}
By axiom (Q1), we have $x^2=x$ in $R[X]$ for all $x \in X$. Now,
\begin{eqnarray*}
x & = & (x-y+y)^2\\
& = & (x-y)^2+y^2+(x-y).y+y.(x-y)\\
& \equiv & x.y+y.x-y \mod  \Delta_R^2(X).
\end{eqnarray*}
Thus, $x.y+y.x \equiv x+y \mod \Delta_R^2(X)$ for all $x, y \in X$.
\end{proof}

If $Y$ is a subrack of a rack $X$, then $\Delta_R(Y) \subseteq \Delta_R(X)$. Denote by $\Delta_R(X:Y)$ the two-sided ideal of $R[X]$ generated by $\Delta_R(Y)$. Then
$$
\Delta_R(X: Y) = \Delta_R(Y) + R[X] \Delta_R(Y) + \Delta_R(Y) R[X] + R[X] \Delta_R(Y) R[X].
$$

Given a subrack $Y$ of a rack $X$, it is natural to look for conditions under which $\Delta_R(Y)$ is a two-sided ideal of $R[X]$. For trivial racks, we have the following result.

\begin{proposition}
Let $X$ be a trivial rack, $Y$ a subrack of $X$ and $R$ an associative ring. Then $\Delta_R(Y)$ is a two-sided  ideal of $R[X]$.
\end{proposition}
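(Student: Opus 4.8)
The plan is to exploit the drastic simplification that triviality of $X$ forces on the multiplication of $R[X]$. Since $x.y = x$ for all $x, y \in X$, I would first record the product rule for arbitrary elements: writing $a = \sum_i \alpha_i x_i$ and $b = \sum_j \beta_j x_j$, the defining formula for the multiplication gives
\[
a.b = \sum_{i,j} \alpha_i \beta_j (x_i.x_j) = \sum_i \alpha_i \Big( \sum_j \beta_j \Big) x_i = a \cdot \varepsilon(b),
\]
where $a \cdot \varepsilon(b)$ denotes the right scalar multiple of $a$ by the element $\varepsilon(b) \in R$. Thus in the rack ring of a trivial rack every product $a.b$ is nothing but $a$ rescaled by the augmentation of the right-hand factor. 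This identity is the engine of the argument, and once it is in place the rest is bookkeeping.

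Next I would observe that $\Delta_R(Y)$, being the kernel of the augmentation $\varepsilon$ restricted to $R[Y]$, is an $R$-submodule of $R[X]$ (equivalently, by the first Proposition, it is spanned over $R$ by the differences $y - y'$ with $y, y' \in Y$); in particular it is an additive subgroup, so it remains only to verify closure under left and right multiplication by arbitrary elements of $R[X]$. For the right side, take $u \in \Delta_R(Y)$ and $a \in R[X]$: the product rule gives $u.a = u \cdot \varepsilon(a)$, which lies in $\Delta_R(Y)$ since $\Delta_R(Y)$ is stable under scalar multiplication by $R$. For the left side, the same rule gives $a.u = a \cdot \varepsilon(u)$, and because $u \in \Delta_R(Y) = \ker \big( \varepsilon|_{R[Y]} \big)$ we have $\varepsilon(u) = 0$, so $a.u = 0 \in \Delta_R(Y)$. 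Combining these two inclusions shows that $\Delta_R(Y)$ absorbs multiplication from both sides and is therefore a two-sided ideal.

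I do not expect a genuine obstacle here; the one point that requires care is the possible non-commutativity of $R$. Since the scalars emerging from the product rule act on the right, I would make sure to check that $\Delta_R(Y)$ is stable specifically under right multiplication by ring elements, which holds because the augmentation is computed by summing coefficients and $\big( \sum_i \alpha_i \big) r = \sum_i (\alpha_i r)$. With the side of the scalar action tracked consistently, the argument goes through verbatim for any associative ring $R$, with or without unity.
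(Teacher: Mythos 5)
Your proof is correct and follows essentially the same route as the paper: both arguments reduce to the observation that triviality of $X$ collapses every product, so that left multiples of $\Delta_R(Y)$ vanish and right multiples stay inside $\Delta_R(Y)$. You merely package the computation as the uniform identity $a.b = a\,\varepsilon(b)$ instead of checking it on the generators $y - z$, which is a cosmetic difference (and your remark about tracking the side of the scalar action for non-commutative $R$ is a welcome extra precaution).
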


\begin{proof}
First, notice that, $\Delta_R(Y)$ is generated as an $R$-module by the set $\{y-z~|~y,z \in Y \}$. Then for any $\sum_i\alpha_i x_i \in R[X]$, we have
$$\big(\sum_i\alpha_i x_i\big).(y -z)=\sum_i\alpha_i (x_i .y - x_i . z)=\sum_i\alpha_i (x_i - x_i)= 0 \in \Delta_R(Y),$$
and
$$(y -z). \big(\sum_i\alpha_i x_i\big)=\sum_i\alpha_i (y . x_i -z . x_i)=\sum_i\alpha_i (y -z) \in \Delta_R(Y).$$
Hence $\Delta_R(Y)$ is a two-sided ideal of $R[X]$.
\end{proof}

The next result characterises trivial quandles in terms of their augmentation ideals.

\begin{theorem}\label{deltasqzero}
Let $X$ be a quandle and $R$ an associative ring. Then the quandle $X$ is trivial if and only if $\Delta_R^2(X)=\{0\}$.
\end{theorem}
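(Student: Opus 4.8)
The plan is to prove the two implications separately, with the forward direction a one-line computation and the reverse direction carrying the real content. Throughout I regard $X \subseteq R[X]$ as the $R$-basis of the free module $R[X]$, as in the preceding propositions, and I assume $R \neq 0$ (for the zero ring the statement degenerates, since then $R[X]=0$ and $\Delta_R^2(X)=\{0\}$ regardless of $X$).

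For the forward direction, suppose $X$ is trivial, so $a.b = a$ for all $a,b \in X$. By the proposition giving a generating set for the augmentation ideal, $\Delta_R(X)$ is spanned over $R$ by the differences $x-y$, and since the multiplication of $R[X]$ satisfies $(\alpha u).(\beta v)=\alpha\beta(u.v)$, the ideal $\Delta_R^2(X)$ is spanned over $R$ by the products $(x-y).(u-v)$. Each such product expands as $x.u - x.v - y.u + y.v = x - x - y + y = 0$ by triviality, so $\Delta_R^2(X)=\{0\}$.

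For the reverse direction, assume $\Delta_R^2(X)=\{0\}$. The starting point is the preceding proposition, which asserts $x.y + y.x \equiv x + y \pmod{\Delta_R^2(X)}$; under our hypothesis this upgrades to the genuine equation $x.y + y.x = x+y$ in $R[X]$ for all $x,y$. The key step is to deduce from this, by comparing coefficients in the free module $R[X]$, that $x.y \in \{x,y\}$ for every pair. Granting this, the finish is clean and uses axiom (R1): each $S_y$ is a bijection of $X$, so if $x \neq y$ and $x.y = y$, then $S_y(x) = y = y.y = S_y(y)$ forces $x=y$, a contradiction; hence $x.y = x$. Together with $x.x = x$ this gives triviality of $X$.

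I expect the coefficient-comparison step, rather than the quandle-theoretic finish, to be the main obstacle, precisely because $R$ need not have an identity nor be free of additive torsion. Writing $p=x.y$ and $q=y.x$, the relation reads $p+q = x+y$ with all four symbols basis elements and, in the interesting case, $x\neq y$. The plan is to argue by contradiction: if $p \notin \{x,y\}$, inspecting the coefficient of the basis element $p$ shows the two sides cannot match unless there is a torsion coincidence, namely $p = q$ with $2=0$ in $R$; but then the left-hand side collapses to $2p = 0$, forcing $x+y=0$, which is impossible for distinct basis elements over a nonzero ring. Hence $\{p,q\}=\{x,y\}$, and in particular $x.y\in\{x,y\}$. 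This torsion bookkeeping is where care is genuinely required; every other step is formal.
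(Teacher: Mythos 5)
Your proposal is correct and follows essentially the same route as the paper: expand the square of a difference of basis elements, compare coefficients in the free $R$-module $R[X]$ to force $\{x.y,\, y.x\} = \{x,\, y\}$, and then use (R1) together with (Q1) to rule out the swapped case $x.y = y$. The only cosmetic difference is that you invoke the previously stated congruence $x.y + y.x \equiv x + y \bmod \Delta_R^2(X)$ (itself proved by exactly this expansion) instead of expanding $(x-x_0).(x-x_0)=0$ directly, and your bookkeeping of the degenerate coefficient cases is, if anything, slightly more careful than the paper's.
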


\begin{proof}
Suppose that $\Delta_R^2(X)=\{0\}$. Let $x_0 \in X$ be an arbitrary but fixed element. Observe that $\Delta_R^2(X)$ is generated as an $R$-module by the set $\big\{(x-x_0).(y-x_0)~|~x, y \in X\setminus \{x_0 \} \big\}$. It follows that $(x-x_0).(y-x_0)=0$ for all $x, y \in X\setminus \{x_0 \}$. In particular, $(x-x_0).(x-x_0)=0$ for all $x \in X\setminus \{x_0 \}$, which yields
$$x-x_0 . x-x . x_0+x_0=0$$
for all $x \in X\setminus \{x_0 \}$. Since it is an expression in $R[X]$, the terms must cancel off with each other. Suppose that $x_0 . x=x$ and $x . x_0=x_0$ for some $x \in X\setminus \{x_0 \}$. Also, we have $x . x=x$ by axiom (Q1). Thus by axiom (R1), we must have $x=x_0$, which is a contradiction. Hence, we must have $x . x_0=x$ and $x_0 . x=x_0$ for all $x \in X\setminus \{x_0 \}$. This means $x_0$ acts trivially on all elements of $X$. Since $x_0$ was an arbitrary element, it follows that the quandle $X$ is trivial.

Conversely, suppose that $X$ is a trivial quandle. Let $y, z, y', z' \in X$. Then
$$(y-z).(y'-z')=y . y'-z . y'-y . z'+z . z'=y-z-y+z=0.$$
By linearity, it follows that $\Delta_R^2(X)=\{0\}$.
\end{proof}

\begin{corollary}
A group $G$ is abelian if and only if $\Delta_R^2\big(\Conj(G)\big)=\{0\}$.
\end{corollary}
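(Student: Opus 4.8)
The plan is to deduce this directly from Theorem \ref{deltasqzero} applied to the quandle $X = \Conj(G)$. That theorem asserts that $\Delta_R^2(\Conj(G)) = \{0\}$ if and only if $\Conj(G)$ is a trivial quandle, so the entire corollary reduces to verifying the purely group-theoretic equivalence that $\Conj(G)$ is trivial precisely when $G$ is abelian.

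To establish that equivalence, I would simply unpack the definition of triviality for the conjugation quandle. Recall from the examples in Section \ref{section2} that the operation on $\Conj(G)$ is $a.b = b^{-1} a b$. By definition, $\Conj(G)$ is trivial means $a.b = a$ for all $a, b \in G$, that is, $b^{-1} a b = a$, or equivalently $ab = ba$, for all $a, b \in G$. This last condition is exactly the statement that $G$ is abelian.

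Chaining the two equivalences then yields the claim: $G$ is abelian if and only if $\Conj(G)$ is trivial, which by Theorem \ref{deltasqzero} holds if and only if $\Delta_R^2\big(\Conj(G)\big) = \{0\}$.

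I do not expect any genuine obstacle here, since the statement is essentially a specialization of the already-proved Theorem \ref{deltasqzero} combined with an elementary translation of quandle triviality into commutativity. The only point meriting a moment's attention is ensuring that $\Conj(G)$ is indeed a quandle, so that Theorem \ref{deltasqzero} legitimately applies; this is recorded among the standard examples in Section \ref{section2}, and so may be invoked without further comment.
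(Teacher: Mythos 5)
Your proposal is correct and is exactly the argument the paper intends: the corollary is stated as an immediate consequence of Theorem \ref{deltasqzero}, with the only content being the observation that $\Conj(G)$ is a trivial quandle (i.e.\ $b^{-1}ab=a$ for all $a,b$) precisely when $G$ is abelian. Nothing further is needed.
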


\begin{remark}
Obviously, if $X$ is a trivial rack, then $\Delta_R^2(X)=\{0\}$. However, the converse is not true for racks. For example, take $X= \{a_1, \dots, a_n\}$ with the rack structure given by $a_i.a_j=a_{n-i+1}$. Then $X$ is not a trivial rack. On the other hand, for all $1 \le i, j \le n$, we have
$$(a_i-a_1).(a_j-a_1)=a_i.a_j - a_i.a_1 - a_1.a_j + a_1.a_1=a_{n-i+1}-a_{n-i+1}-a_n+a_n=0,$$ and hence  $\Delta_R^2(X)=\{0\}$.
\end{remark}
 \bigskip

\section{Relations between subquandles and ideals}\label{section4}
In this section, we investigate relationships between subquandles of the given quandle and ideals of the associated quandle ring.

\subsection{Subquandles associated to ideals}
Let $X$ be a quandle and $R$ an associative ring.  For each $x_0 \in X$ and each two sided ideal $I$ of $R[X]$, we define
$$X_{I, x_0}=\{ x \in X~|~x-x_0 \in I \}.$$

Notice that, if $I= \Delta_R(X)$, then $X_{I, x_0}=X$, and if $I = \{0\}$, then $X_{I,x_0} = \{x_0\}$. In general, we have the following.

\begin{theorem}\label{ideal-to-subquandle}
Let $X$ be a finite quandle and $R$ an associative ring. Then for each $x_0 \in X$ and a two sided ideal $I$  of $R[X]$, the set $X_{I, x_0}$ is a subquandle of $X$. Further, there is a subset $\{ x_1, \ldots, x_m \}$ of $X$  such that $X$ is the disjoint union
$$
X = X_{I,x_1} \sqcup \cdots \sqcup X_{I,x_m}.
$$
\end{theorem}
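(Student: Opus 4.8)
The plan is to prove the two assertions separately. For the subquandle claim, I would first record that $X_{I,x_0}$ is nonempty, since $x_0 - x_0 = 0 \in I$ forces $x_0 \in X_{I,x_0}$. The heart of the matter is closure under the quandle operation. Given $x, y \in X_{I,x_0}$, so that $x - x_0 \in I$ and $y - x_0 \in I$, I would use $x_0 . x_0 = x_0$ (axiom (Q1)) together with distributivity to write
$$x.y - x_0 = (x - x_0).y + x_0.(y - x_0).$$
The first summand lies in $I$ because $I$ is a right ideal and $x - x_0 \in I$, and the second lies in $I$ because $I$ is a left ideal and $y - x_0 \in I$. Hence $x.y - x_0 \in I$, that is, $x.y \in X_{I,x_0}$, so $X_{I,x_0}$ is closed under the operation.

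Next, to upgrade closure to the full subquandle property I would invoke finiteness. Axioms (Q1) and (R2) are automatically inherited from $X$, so only (R1) needs checking. For each $y \in X_{I,x_0}$ the map $S_y$ is a bijection of $X$, and by closure it restricts to an injection $S_y : X_{I,x_0} \to X_{I,x_0}$. Since $X_{I,x_0}$ is a finite set, this injection is a bijection, so every $x \in X_{I,x_0}$ has a preimage $z \in X_{I,x_0}$ with $z.y = x$, and this $z$ is unique by (R1) in $X$. Thus (R1) holds inside $X_{I,x_0}$, and $X_{I,x_0}$ is a subquandle.

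For the partition statement, I would introduce the relation $x \sim y$ on $X$ defined by $x - y \in I$. Reflexivity, symmetry and transitivity are immediate from $I$ being an additive subgroup of $R[X]$. By definition $X_{I,x_0}$ is precisely the $\sim$-class of $x_0$, so the distinct sets of the form $X_{I,x_0}$ are exactly the equivalence classes of $\sim$, which partition $X$. Choosing one representative $x_1, \dots, x_m$ from each class (finitely many, as $X$ is finite) then yields the disjoint union $X = X_{I,x_1} \sqcup \cdots \sqcup X_{I,x_m}$.

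The main obstacle is the passage from mere closure to the subquandle property, i.e. solving $z.y = x$ \emph{within} $X_{I,x_0}$. One might hope to do this at the ring level: by (R2) the automorphism $S_y$ extends to a ring automorphism $\widehat{S}_y$ of $R[X]$, and the computation above shows $(z - x_0).y = \widehat{S}_y(z - x_0) \in I$ whenever $z.y \in X_{I,x_0}$. However, one cannot conclude $z - x_0 \in I$ this way, because there is no reason for the ideal $I$ to be invariant under $\widehat{S}_y$. This is exactly why the hypothesis that $X$ be finite is essential: finiteness lets us bypass the ring-theoretic inversion and recover (R1) purely from closure and bijectivity of $S_y$ on the finite set $X_{I,x_0}$.
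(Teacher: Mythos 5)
Your proposal is correct and follows essentially the same route as the paper: the identity $x.y-x_0=(x-x_0).y+x_0.(y-x_0)$ for closure, finiteness of $X_{I,x_0}$ to upgrade the injective restriction of $S_y$ to a bijection and recover axiom (R1), and the observation that the sets $X_{I,x_0}$ are the classes of the equivalence relation $x\sim y \Leftrightarrow x-y\in I$ (the paper phrases this as ``two such sets intersect iff they are equal,'' which is the same argument). Your closing remark correctly identifies why finiteness, rather than ideal-invariance under $S_y$, is what makes (R1) work.
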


\begin{proof}
Obviously $x_0 \in X_{I, x_0}$. Further, if $x, y \in X_{I, x_0}$, then $$x.y-x_0= x.y-x_0.y+x_0.y -x_0.x_0=(x-x_0).y+x_0.(y -x_0) \in I.$$
This implies that $x. y \in X_{I, x_0}$. Further, the inner automorphism $S_y$ restricts to a map $X_{I, x_0}  \to X_{I, x_0}$. Since $S_y$ is injective and $X$ is finite, it follows that  $S_y\big(X_{I, x_0} \big)= X_{I, x_0}$. Hence, there exists a unique element $z \in X_{I, x_0} $ such that $x=z.y$, thereby proving that $X_{I, x_0}$ is a subquandle of $X$.

For the second assertion, it is sufficient to prove that if $x_0, y_0 \in X$, then the subquandles $X_{I, x_0}$ and $X_{I, y_0}$ intersect if and only if they are equal. Let $z \in X_{I, x_0} \cap X_{I, y_0}$. Then we have $x_0 - y_0 \in I$, which further implies that $x_0 \in X_{I, y_0}$. Now, if $x \in X_{I, x_0}$, then $x-y_0=x-x_0+x_0-y_0 \in I$, which implies that $X_{I, x_0} \subseteq X_{I, y_0}$. By interchanging roles of $x_0$ and $y_0$, we get $X_{I, x_0} = X_{I, y_0}$.
\end{proof}

\begin{remark}
If $X$ is an involutary quandle (not necessarily finite), then $X_{I, x_0}$ is always a subquandle of $X$, being closed under the quandle multiplication. In particular, this holds for trivial quandles.
\end{remark}

Given a two sided ideal $I$ of $R[X]$ and elements $x_0, y_0 \in X$, it is natural to ask whether there is any relation between the subquandles $X_{I, x_0}$ and $X_{I, y_0}$. We answer this question in the following result.

\begin{theorem}\label{id-subq}
Let $X$ be a finite involutary quandle and $R$ an associative ring.  If $I$ is a two-sided ideal of $R[X]$ and $x_0, y_0 \in X$ are in the same orbit under action of $\Inn(X)$, then $X_{I, x_0} \cong X_{I, y_0}$.
\end{theorem}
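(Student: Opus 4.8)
The plan is to realise the isomorphism by transporting along an inner automorphism. Since $x_0$ and $y_0$ lie in the same $\Inn(X)$-orbit, I would choose $\phi \in \Inn(X)$ with $\phi(x_0)=y_0$. Any quandle automorphism $\phi$ of $X$ extends $R$-linearly to a map $\tilde{\phi}\colon R[X] \to R[X]$, $\tilde{\phi}\big(\sum_i \alpha_i x_i\big)=\sum_i \alpha_i \phi(x_i)$, and this $\tilde{\phi}$ is a ring automorphism: it is additive and $R$-linear by construction, multiplicative because $\phi(x_i.x_j)=\phi(x_i).\phi(x_j)$ on basis elements (extend bilinearly), and bijective because $\phi$ is. I would record this as a preliminary observation, since it is used throughout.

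The heart of the argument is to show that $\tilde{\phi}(I)=I$, so that $\tilde{\phi}$ carries the coset condition defining $X_{I, x_0}$ to that defining $X_{I, y_0}$. First I would reduce to the generators: since $X$ is involutary, $S_z^{-1}=S_z$ for every $z \in X$, so $\Inn(X)$ is generated by the involutions $S_z$ and $\phi$ can be written as a product $S_{z_k}\cdots S_{z_1}$. The extension $\tilde{S_z}$ is precisely right multiplication by $z$, because $\tilde{S_z}\big(\sum_i \alpha_i x_i\big)=\sum_i \alpha_i (x_i . z)$; hence $\tilde{S_z}(I)\subseteq I$ as $I$ is a (two-sided, in particular right) ideal. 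Composing gives $\tilde{\phi}(I)\subseteq I$. Applying the same reasoning to $\phi^{-1}$, which is again a product of maps $S_z$ because $X$ is involutary, yields $\tilde{\phi}^{-1}(I)\subseteq I$, and therefore $\tilde{\phi}(I)=I$.

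With this in hand the conclusion is immediate. For $x \in X_{I, x_0}$ we have $x-x_0 \in I$, whence $\phi(x)-y_0=\tilde{\phi}(x-x_0)\in \tilde{\phi}(I)=I$, so $\phi(x)\in X_{I, y_0}$; thus $\phi$ maps $X_{I, x_0}$ into $X_{I, y_0}$, and symmetrically $\phi^{-1}$ maps $X_{I, y_0}$ into $X_{I, x_0}$ (using $\phi^{-1}(y_0)=x_0$ and $\tilde{\phi}^{-1}(I)=I$). Hence $\phi$ restricts to a bijection $X_{I, x_0}\to X_{I, y_0}$, which, being the restriction of the quandle automorphism $\phi$, is a quandle isomorphism. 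Both sets are genuine subquandles by the remark following Theorem \ref{ideal-to-subquandle}, since $X$ is involutary.

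The main obstacle is the step $\tilde{\phi}(I)=I$, and it is here that the involutary hypothesis is indispensable: for a general quandle the inverse $S_z^{-1}$ is right multiplication with respect to the dual operation rather than the quandle operation, so $\tilde{S_z}^{-1}$ need not preserve the right ideal $I$, and one could not conclude $\tilde{\phi}(I)=I$. The involutary condition removes this difficulty by forcing $S_z^{-1}=S_z$, so that both $\phi$ and $\phi^{-1}$ are built from maps preserving $I$. I would expect finiteness to play no essential role here beyond ensuring, via Theorem \ref{ideal-to-subquandle}, that the objects under discussion are subquandles.
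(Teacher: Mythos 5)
Your proof is correct, and its engine is the same as the paper's: both arguments transport along an inner automorphism $\phi = S_{z_k}\cdots S_{z_1}$ sending $x_0$ to $y_0$, both rest on the fact that right multiplication by an element of $X$ preserves the right ideal $I$, and both invoke involutarity to write $\phi^{-1}$ again as a product of the maps $S_z$. The difference is in the endgame. The paper records only the one-sided statement that each $S_{y_0}$ embeds $X_{I,x_0}$ into $X_{I,x_0.y_0}$, chains these into injections $X_{I,x_0}\hookrightarrow X_{I,y_0}$ and $X_{I,y_0}\hookrightarrow X_{I,x_0}$, and then needs finiteness to conclude that two mutually embedded finite quandles are isomorphic. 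You instead prove the two-sided invariance $\tilde{\phi}(I)=I$, which shows directly that these two embeddings are mutually inverse, so $\phi$ itself restricts to an isomorphism $X_{I,x_0}\to X_{I,y_0}$. This buys a slightly stronger statement: finiteness becomes superfluous, since, as the paper's own remark following Theorem \ref{ideal-to-subquandle} notes, the sets $X_{I,x_0}$ are subquandles of any involutary quandle, finite or not. Your diagnosis of where involutarity is indispensable (making $\phi^{-1}$ a product of the $S_z$, hence compatible with the right-ideal property) coincides exactly with the role it plays in the paper's argument.
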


\begin{proof}
We first claim that if $x_0, y_0 \in X$, then the map $f_{y_0}: X_{I, x_0} \to X_{I, x_0.y_0}$ given by $f_{y_0}(x)=x.y_0$ is an injective quandle homomorphism. For, if $x \in X_{I, x_0}$, then $x-x_0 \in I$. Consequently, $x.y_0-x_0.y_0= (x-x_0).y_0 \in I$, which further implies $f_{y_0}(x)=x.y_0 \in X_{I, x_0.y_0}$. The claim now follows by observing that $f_{y_0}$ is simply the restriction of the inner automorphism $S_{y_0}$ on the subquandle $X_{I, x_0}$.

Now, suppose that $x_0, y_0 \in X$ such that there exists $f \in \Inn(X)$ with $f(x_0)=y_0$. Since $X$ is involutary, $S_x=S_x^{-1}$ for all $x \in X$. Therefore, by definition $f= S_{x_k}  \cdots  S_{x_1}$ for some $x_i \in X$, and
$$y_0= S_{x_k}  \cdots   S_{x_1}(x_0)= (\cdots ((x_0. x_1) . x_2) \cdots ) . x_k.$$
By the claim above, there is a sequence of embeddings of subquandles of $X$
$$ X_{I, x_0} \hookrightarrow X_{I, x_0. x_1} \hookrightarrow X_{I, (x_0. x_1). x_2} \hookrightarrow \cdots \hookrightarrow  X_{I, y_0}.$$
Thus, we obtain an embedding of $X_{I, x_0}$ into $X_{I, y_0}$. Writing $x_0= S_{x_1}  \cdots   S_{x_k}(y_0)$, we obtain an embedding of $X_{I, y_0}$ into $X_{I, x_0}$. Since $X$ is finite, it follows that $X_{I, x_0} \cong X_{I, y_0}$.
\end{proof}

The following is an immediate consequence.

\begin{corollary}
Let $X$ be a finite connected involutary quandle and $R$ an associative ring. Then $X_{I, x_0} \cong X_{I, y_0}$ for any ideal $I$ of $R[X]$ and $x_0, y_0 \in X$.
\end{corollary}

For example, trivial quandles and dihedral quandles $\R_n$ of odd order are connected  and involutary.  The example below shows that we cannot write $X_{I, x_0} = X_{I, y_0}$ instead $X_{I, x_0} \cong X_{I, y_0}$ in Theorem \ref{id-subq}.

\begin{example}
Let $T_2 = \{ x, y \}$ be the two element trivial quandle and $I$ the two-sided ideal in $\mathbb{Z} [T_2]$, generated by the element $u = 2 x + 2y$. Since
$$
ux = u,~~uy = u,~~xu = 4 x,~~yu = 4 y,
$$
it is not difficult to see that
$$
I = \big\{ 2 \alpha x + (2 \alpha + 4 \beta) y~|~\alpha, \beta \in \mathbb{Z} \big\}.
$$
Obviously $x \in (T_2)_{I,x}$. Now, if $y \in (T_2)_{I,x}$, then $y-x \in I$, i.e. $y-x = 2 \alpha x + (2 \alpha + 4 \beta) y$ for some integers $\alpha$ and $\beta$, which is not possible. This implies $(T_2)_{I,x} = \{ x \}$. Similarly $(T_2)_{I,y} = \{ y \}$, and therefore $(T_2)_{I,x} \not= (T_2)_{I,y}$.
\end{example}

Notice that, in the preceding example, we have $I = 2\mathbb{Z} \cdot I_1$, where $2\mathbb{Z}$ is an ideal of $\mathbb{Z}$ and
$$
I_1 = \{\alpha x + (\alpha + 2 \beta) y~|~\alpha, \beta \in \mathbb{Z} \}
$$
is a proper ideal of $\mathbb{Z}[T_2]$ since $y \not\in I_1$.

Let $X$ be a quandle and $R$ an associative ring. We say that an ideal $I$ of the rack ring $R[X]$ is $R$-{\it prime} if $I$ is not a product $I = I_0 \cdot I_1$, where $I_0$ is a proper ideal of $R$ and $I_1$ is a proper ideal of $R[X]$. In view of this definition, it is tempting to ask whether the condition of $I$ being $R$-prime in Theorem \ref{id-subq} imply that $X_{I, x_0} = X_{I, y_0}$? The following example shows that it is not true in general.

\begin{example}
Let $X = R_4 = \{ a_0, a_1, a_2, a_3 \}$ be the dihedral quandle. We know that $X$ is not connected since  the elements $a_0$ and $a_1$ lying in different orbits. In the quandle ring $\mathbb{Z}[R_4]$ take the two-sided ideal $I$ with the linear basis $\{ a_0 + a_1 - a_2 - a_3, 2(a_2 - a_0) \}$ (in fact $I = \Delta^2(R_4)$). It is not difficult to check  that $X_{I, x_0} = \{ x_0 \}$ and $X_{I, y_0} = \{ y_0 \}$, i.~e. $X_{I, x_0} \not= X_{I, y_0}$. Moreover, $X$ is the following disjoint union of its trivial subquandles:
$$
X = X_{I, a_0} \sqcup X_{I, a_1} \sqcup X_{I, a_2} \sqcup X_{I, a_3}.
$$
\end{example}

\begin{remark}
It is worth noting that the results of the preceding discussion holds for racks as well. It would be interesting to explore whether Theorem \ref{id-subq} holds if $X$ is not involutary or  if $X$ is  involutary but $x_0$ and $y_0$ lie in different orbits under the action of $\Inn(X)$.
\end{remark}
\bigskip

\subsection{Ideals associated to subquandles}
Next, we proceed in the reverse direction of associating an ideal of $R[X]$ to a subquandle of $X$. Let $f:X \to Z$ be a quandle homomorphism. Consider the equivalence relation $\sim$ on $X$ given by $x_1\sim x_2$ if $f(x_1)=f(x_2)$. Let $X/_\sim$ be the set of equivalence classes, where equivalence class of an element $x$ is denoted by $$X_x:= \{x' \in X~|~f(x')=f(x) \}.$$

\begin{proposition}
$X_x$ is a subquandle of $X$ for each $x \in X$.
\end{proposition}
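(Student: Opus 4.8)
The plan is to verify the three quandle axioms for the subset $X_x$ equipped with the restriction of the binary operation of $X$. First I would note that $X_x$ is non-empty, since $f(x) = f(x)$ forces $x \in X_x$. Axiom (Q1) is inherited for free: every element $x'$ of $X$ satisfies $x' . x' = x'$, so in particular this holds for all elements of $X_x$. Thus the real content lies in checking that $X_x$ is closed under the operation and that axiom (R1) holds within $X_x$.

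For closure, I would take $x_1, x_2 \in X_x$ and use that $f$ is a quandle homomorphism together with (Q1) in $Z$: then $f(x_1 . x_2) = f(x_1) . f(x_2) = f(x) . f(x) = f(x)$, so $x_1 . x_2 \in X_x$. For axiom (R1), given $x_1, x_2 \in X_x$, axiom (R1) in $X$ already supplies a unique $z \in X$ with $x_1 = z . x_2$. Uniqueness within $X_x$ is then automatic, since $z$ is already the unique such element in the larger set $X$. So the whole matter reduces to showing that this $z$ actually lies in the fiber $X_x$.

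The key step, which I expect to be the only place requiring genuine care, is precisely this claim that $z \in X_x$. Applying $f$ to $x_1 = z . x_2$ gives $f(x) = f(x_1) = f(z) . f(x_2) = f(z) . f(x)$, while (Q1) in $Z$ gives $f(x) = f(x) . f(x)$. Hence both $f(z)$ and $f(x)$ solve the equation $f(x) = w . f(x)$ for $w \in Z$, and axiom (R1) applied in the target quandle $Z$ forces $f(z) = f(x)$, that is, $z \in X_x$. Equivalently, one can phrase the argument via inner automorphisms: the closure computation shows that $S_{x_2}$ maps $X_x$ into itself, and the fiber computation shows that the $S_{x_2}$-preimage of any $x_1 \in X_x$ again lies in $X_x$, so $S_{x_2}$ restricts to a bijection of $X_x$. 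Either route establishes both closure and (R1), proving that $X_x$ is a subquandle. I would emphasise that, in contrast to Theorem \ref{ideal-to-subquandle}, no finiteness hypothesis is needed here, since the homomorphism structure produces the required preimage directly rather than through an injectivity-plus-counting argument.
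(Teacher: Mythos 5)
Your proof is correct and follows essentially the same route as the paper: closure via $f(x_1.x_2)=f(x_1).f(x_2)=f(x).f(x)=f(x)$, and (R1) by applying $f$ to the unique solution in $X$ and then using (Q1) together with the uniqueness clause of (R1) in the target quandle $Z$ to conclude the solution lies in the fiber. Your added remarks (the reformulation via $S_{x_2}$ restricting to a bijection of $X_x$, and the observation that no finiteness is needed) are accurate but not part of the paper's argument.
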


\begin{proof}
Obviously $X_x$ is non-empty since $x \in X_x$. Only the axiom (R1) needs to be checked. Let $x_1, x_2 \in X_x$. 
Then $f(x_1 . x_2)=f(x_1) .f(x_2)=f(x).f(x)=f(x)$, and hence $x_1.x_2 \in X_x$. Further, if $x_3 \in X$ is the unique element such that $x_3.x_1=x_2$, then applying $f$ yields $f(x_3) .f(x)=f(x)$. This together with the axiom (Q1) imply that $f(x_3)=f(x)$, and hence $x_3 \in X_x$.
\end{proof}

The following is a sort of first isomorphism theorem for quandles.

\begin{theorem}\label{quotient-quandle}
The binary operation given by $X_{x_1} \circ X_{x_2}= X_{x_1.x_2}$ gives a quandle structure on $X/_\sim$. Further, if $f:X \to Z$ is a surjective quandle homomorphism, then $X/_\sim \cong Z$ as quandles.
\end{theorem}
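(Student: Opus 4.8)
The plan is to build the obvious candidate isomorphism and let the quotient operation be forced by it. Define $\overline{f} : X/_\sim \to Z$ by $\overline{f}(X_x) = f(x)$; this is well defined and injective essentially by definition of $\sim$, since $X_{x_1} = X_{x_2}$ is precisely the statement $f(x_1) = f(x_2)$. The operation $\circ$ is designed so that $\overline{f}$ becomes a homomorphism, and indeed (once $\circ$ is known to make sense) $\overline{f}(X_{x_1} \circ X_{x_2}) = \overline{f}(X_{x_1 . x_2}) = f(x_1 . x_2) = f(x_1) . f(x_2) = \overline{f}(X_{x_1}) . \overline{f}(X_{x_2})$. So once I know $\circ$ is a well-defined quandle operation, the second assertion is immediate: when $f$ is surjective, $\overline{f}$ is also surjective, hence a bijective quandle homomorphism, that is, an isomorphism.

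First I would settle well-definedness of $\circ$, which is the real content of the first assertion. Because $\circ$ is specified on representatives, I must check that the class $X_{x_1 . x_2}$ depends only on the classes $X_{x_1}$ and $X_{x_2}$, not on the chosen representatives. If $X_{x_1} = X_{x_1'}$ and $X_{x_2} = X_{x_2'}$, then $f(x_1) = f(x_1')$ and $f(x_2) = f(x_2')$, so applying the homomorphism $f$ gives $f(x_1 . x_2) = f(x_1) . f(x_2) = f(x_1') . f(x_2') = f(x_1' . x_2')$, whence $X_{x_1 . x_2} = X_{x_1' . x_2'}$. With $\circ$ well defined, axioms (Q1) and (R2) transfer verbatim from $X$: one has $X_x \circ X_x = X_{x . x} = X_x$ by (Q1) in $X$, and $(X_{x_1} \circ X_{x_2}) \circ X_{x_3} = X_{(x_1 . x_2) . x_3} = X_{(x_1 . x_3) . (x_2 . x_3)} = (X_{x_1} \circ X_{x_3}) \circ (X_{x_2} \circ X_{x_3})$ by (R2) in $X$.

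The delicate axiom, and the main obstacle, is (R1) for $\circ$. Existence is inherited from $X$: given classes $X_{x_1}, X_{x_2}$, axiom (R1) in $X$ supplies $z \in X$ with $z . x_2 = x_1$, and then $X_z \circ X_{x_2} = X_{z . x_2} = X_{x_1}$. Uniqueness, however, cannot be read off from $X$ alone, since inequivalent elements of $X$ could a priori solve the same equation; here I would invoke axiom (R1) in the target $Z$. If $X_z \circ X_{x_2} = X_{z'} \circ X_{x_2}$, then $X_{z . x_2} = X_{z' . x_2}$, so $f(z) . f(x_2) = f(z') . f(x_2)$; since (R1) makes the map $y \mapsto y . f(x_2)$ injective on $Z$, we obtain $f(z) = f(z')$, i.e. $X_z = X_{z'}$. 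This is the one place where the quotient genuinely remembers that $\sim$ is the fibre relation of a quandle homomorphism rather than an arbitrary equivalence relation. Having verified (Q1), (R1) and (R2), the set $X/_\sim$ is a quandle, and the argument of the first paragraph then completes the proof of the isomorphism.
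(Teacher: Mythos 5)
Your proof is correct and follows essentially the same route as the paper's: both reduce the only nontrivial point to uniqueness in axiom (R1) for $\circ$, obtained by pushing the equation into $Z$ via $f$ and invoking the rack axiom there. You are somewhat more thorough --- you verify well-definedness of $\circ$ and the transfer of (Q1) and (R2) explicitly, which the paper treats as immediate --- and you correctly attribute the uniqueness step to (R1) in $Z$, where the paper's text cites (Q1).
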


\begin{proof}
The operation $X_{x_1} \circ X_{x_2}= X_{x_1.x_2}$ is clearly well-defined. We only need to check the axiom (R1). Let $X_{x_1}, X_{x_2} \in X/_\sim$.  If $x_3 \in X$ is the unique element such that $x_3.x_1=x_2$, then $f(x_3).f(x_1)=f(x_2)$ and $X_{x_3} \circ X_{x_1}=X_{x_2}$. Suppose that there exists another element $X_{x_3'} \in X/_\sim$ such that $X_{x_3'} \circ X_{x_1}=X_{x_2}$, then  $f(x_3').f(x_1)=f(x_2)$. By the axiom (Q1), we must have $f(x_3)=f(x_3')$, and hence $X_{x_3} =X_{x_3'}$.

Suppose that $f:X \to Z$ is surjective. By definition of the equivalence relation, there is a well-defined bijective map $\bar{f}: X/_\sim \to Z$ given by $\bar{f}(X_x)=f(x)$. If $X_{x_1}, X_{x_2} \in X/_\sim$, then
$$\bar{f}(X_{x_1}\circ X_{x_2})=\bar{f}(X_{x_1. x_2})=f(x_1.x_2)=f(x_1).f(x_2)=\bar{f}(X_{x_1}). \bar{f}(X_{x_2}),$$
and hence $\bar{f}$ is an isomorphism of quandles.
\end{proof}

We know that a subgroup of a group is normal if and only if it is the kernel of some group homomorphism. In a similar way, we say that a subquandle $Y$ of a quandle $X$ is {\it normal} if $Y=X_{x_0}$ for some $x_0 \in X$ and some quandle homomorphism $f:X \to Z$. In this case, we say that $Y$ is {\it normal based at} $x_0$.

A {\it pointed quandle}, denoted $(X, x_0)$, is a quandle $X$ together with a fixed base point $x_0$. Let $f:(X, x_0) \to (Z, z_0)$ be a homomorphism of pointed quandles, and $Y=X_{x_0}$ a normal subquandle based at $x_0$. In this situation, we consider $Y$ as the base point of $X/_\sim$, and denote $X/_\sim$ by $X/Y$. Then the natural map $x \mapsto X_x$ is a surjective homomorphism of pointed quandles $$(X, x_0) \to (X/Y, X_{x_0}).$$ This further extends to a surjective ring homomorphism, say, $$\pi: R[X] \to R[X/Y]$$ with $\ker(\pi)$ being a two sided ideal of $R[X]$.

Let $(X, x_0)$ be a pointed quandle, $\mathcal{I}$ the set of two sided ideals of $R[X]$ and $\mathcal{S}$ the set of normal subquandles of $X$ based $x_0$. Then there exist maps $\Phi: \mathcal{I} \to \mathcal{S}$ given by
$$\Phi(I)=X_{I,x_0}$$
and $\Psi: \mathcal{S} \to \mathcal{I}$ given by
$$\Psi(Y)=\ker(\pi).$$
With this set up, we have the following.

\begin{theorem}\label{dictionary}
Let $(X, x_0)$ be a pointed quandle and $R$ an associative ring. Then $\Phi \Psi= \id_{\mathcal{S}}$ and $\Psi \Phi \neq \id_{\mathcal{I}}$.
\end{theorem}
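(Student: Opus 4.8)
The plan is to treat the two assertions separately: $\Phi\Psi = \id_{\mathcal{S}}$ is a direct verification, while $\Psi\Phi \neq \id_{\mathcal{I}}$ only requires a single counterexample.

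For the first claim, I would fix a normal subquandle $Y = X_{x_0}$ based at $x_0$, arising from a quandle homomorphism $f : X \to Z$, and let $\pi : R[X] \to R[X/Y]$ be the induced ring map, so that $\Psi(Y) = \ker(\pi)$. The observation driving everything is that $\pi$ is the $R$-linear extension of the quotient map $x \mapsto X_x$ and that the distinct equivalence classes $X_x$ form an $R$-basis of $R[X/Y]$. Using this freeness, I would argue that for $x \in X$ one has $x - x_0 \in \ker(\pi)$ if and only if $\pi(x) = \pi(x_0)$, i.e. $X_x = X_{x_0}$, i.e. $f(x) = f(x_0)$, i.e. $x \in Y$. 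Reading off the definition of $\Phi$, this gives
$$\Phi\big(\Psi(Y)\big) = X_{\ker(\pi),\, x_0} = \{\, x \in X \mid x - x_0 \in \ker(\pi) \,\} = Y,$$
which is exactly $\Phi\Psi = \id_{\mathcal{S}}$.

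For the second claim, I would reuse the two-element trivial quandle $T_2 = \{x, y\}$ over $R = \mathbb{Z}$ with base point $x_0 = x$. Taking the two-sided ideal $I$ generated by $2x + 2y$, the preceding example already shows $\Phi(I) = (T_2)_{I,x} = \{x\}$. The singleton $\{x\}$ is normal based at $x$, realised by $f = \id_{T_2}$, whose induced relation $\sim$ is equality; for this choice $\pi : \mathbb{Z}[T_2] \to \mathbb{Z}[T_2/_\sim]$ is an isomorphism, so $\Psi\Phi(I) = \ker(\pi) = \{0\}$. Since $I \neq \{0\}$, this gives $\Psi\Phi(I) \neq I$, hence $\Psi\Phi \neq \id_{\mathcal{I}}$.

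The conceptual reason behind the second claim is that $\Phi$ is badly non-injective: the set $X_{I,x_0}$ records only which differences $x - x_0$ land in $I$ and forgets everything happening deeper inside $\Delta_R(X)$, so many distinct ideals (here $\{0\}$ and the ideal generated by $2x+2y$) collapse to the same subquandle. I expect the only genuinely delicate point to be in the first claim, namely keeping track that $\Psi$ depends on the specific congruence $\sim$ attached to $Y$ rather than on the underlying set alone, and that it is precisely the freeness of $R[X/Y]$ as an $R$-module that makes $\pi(x) = \pi(x_0)$ detect membership in $Y$ exactly; the second claim is then immediate from the existing example.
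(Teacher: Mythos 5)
Your proof of $\Phi \Psi = \id_{\mathcal{S}}$ is correct and is essentially the paper's own argument: the chain $x - x_0 \in \ker(\pi) \Leftrightarrow X_x = X_{x_0} \Leftrightarrow f(x) = f(x_0) \Leftrightarrow x \in Y$ is exactly what the paper writes, and your appeal to the freeness of $R[X/Y]$ as an $R$-module merely makes explicit the one step the paper leaves implicit. No complaints there.

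The second claim is where you diverge, and where there is a genuine gap of scope. The theorem fixes an arbitrary pointed quandle $(X, x_0)$ and an arbitrary associative ring $R$, and asserts that for \emph{that} pair the composite $\Psi\Phi$ fails to be the identity on the set $\mathcal{I}$ of two-sided ideals of $R[X]$. Your argument exhibits a witness only for $X = T_2$ and $R = \mathbb{Z}$, so it proves the claim for one particular quandle ring rather than for the $\mathcal{I}$ attached to the given $(X, x_0, R)$. The paper instead produces a uniform witness available in every $\mathcal{I}$: take $I = R[X]$ itself. Then $\Phi(R[X]) = X_{R[X],\, x_0} = X$ (every difference $x - x_0$ lies in $R[X]$), and $X$ is normal based at $x_0$ via the constant homomorphism to a one-point quandle, for which $\pi$ is essentially the augmentation map; hence $\Psi(X) = \Delta_R(X) \neq R[X]$. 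Your $T_2$ computation is correct as far as it goes, and it does illustrate the non-injectivity of $\Phi$ more vividly (both $\{0\}$ and the ideal generated by $2x + 2y$ collapse to the subquandle $\{x\}$), but to prove the theorem as stated you need a witness that exists for every $X$ and $R$, and $I = R[X]$ is the cheap one.
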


\begin{proof}
Let $Y=X_{x_0}$ be a normal subquandle of $X$, that is,  $Y=\{x \in X~|~f(x)=f(x_0) \}$. Then
\begin{eqnarray*}
\Phi \Psi(Y) & = & \Phi \big( \ker(\pi) \big)\\
&=& \big\{x \in X~|~x-x_0 \in  \ker(\pi) \big\}\\
&=& \big\{x \in X~|~X_x=X_{x_0}\big\}\\
&=& \big\{x \in X~|~f(x)=f(x_0)\big\}\\
&=& Y.
\end{eqnarray*}
Obviously, $\Psi \Phi \neq \id_{\mathcal{I}}$, since $\Psi \Phi \big(R[X] \big)=\Psi (X)= \Delta_R(X) \neq R[X]$.
\end{proof}
\bigskip

\section{Extended rack ring and units}\label{section5}
In this section, we assume that $R$ is an associative ring with unity 1. Let  $X$ be a rack. Since $R[X]$ is a ring without unity, it is desirable to embed $R[X]$ into a ring with unity. The ring
$$
R^\circ[X]=R[X] \oplus Re,
$$
where $e$ is a symbol satisfying $e\big(\sum_i\alpha_i x_i\big)= \sum_i\alpha_i x_i= \big(\sum_i\alpha_i x_i\big)e$, is called the {\it extended rack ring} of $X$. For convenience, we denote the unity $1e$ of $R^\circ[X]$ by $e$. We extend the augmentation map $\varepsilon: R^\circ[X] \to R$ to obtain the extended augmentation ideal
$$\Delta_{R^\circ}(X):= \ker(\varepsilon: R^\circ[X] \to R).$$
 In the case $R = \mathbb{Z}$, we simply denote it by $\Delta_{\circ}(X)$. As before, it is easy to see that the set $\{x-e~|~ x \in X  \}$ is a basis for $\Delta_{R^\circ}(X)$ as an $R$-module.

\begin{proposition}
If $X$ is a rack and $x_0 \in X$ a fixed element, then $\Delta_{R^\circ}(X) =\Delta_R(X) +R(e-x_0)$.
\end{proposition}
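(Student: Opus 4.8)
The plan is to prove the two inclusions separately, treating both sides as $R$-submodules of $R^\circ[X]$ and exploiting that $\Delta_{R^\circ}(X)$ is the kernel of the homomorphism $\varepsilon$, which sends the unity $e$ to $1 \in R$ and restricts on $R[X]$ to the ordinary augmentation.

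First I would establish the inclusion $\Delta_R(X) + R(e - x_0) \subseteq \Delta_{R^\circ}(X)$. Since $\Delta_{R^\circ}(X)$ is an $R$-submodule, it suffices to check that each summand lands in the kernel of $\varepsilon$. For $\Delta_R(X)$ this is immediate, as the restriction of $\varepsilon$ to $R[X]$ is the original augmentation map whose kernel is $\Delta_R(X)$. For the generator $e - x_0$, I would compute $\varepsilon(e - x_0) = \varepsilon(e) - \varepsilon(x_0) = 1 - 1 = 0$, so that $e - x_0 \in \Delta_{R^\circ}(X)$, and hence $R(e - x_0) \subseteq \Delta_{R^\circ}(X)$.

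The reverse inclusion is the substantive step. Given an arbitrary $\xi \in \Delta_{R^\circ}(X)$, I would write $\xi = \alpha e + \sum_i \alpha_i x_i$ with $\alpha, \alpha_i \in R$ and $x_i \in X$; the condition $\varepsilon(\xi) = 0$ then becomes the scalar identity $\alpha + \sum_i \alpha_i = 0$. The idea is to absorb the unity into the base point by the elementary rewriting $\alpha e = \alpha(e - x_0) + \alpha x_0$, which gives $\xi = \alpha(e - x_0) + \big(\alpha x_0 + \sum_i \alpha_i x_i\big)$. The second bracket lies in $R[X]$ and has augmentation $\alpha + \sum_i \alpha_i = 0$, so it belongs to $\Delta_R(X)$, while the first term lies in $R(e - x_0)$. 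This exhibits $\xi$ as an element of $\Delta_R(X) + R(e - x_0)$ and completes the inclusion.

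I do not anticipate a genuine obstacle; the only point requiring care is the bookkeeping on the augmentation, namely confirming that $\varepsilon(e) = 1$ (forced by $\varepsilon$ being a unital ring map onto $R$) and that the coefficient relation $\alpha = -\sum_i \alpha_i$ is exactly what makes the $R[X]$-part of the rewritten $\xi$ augmentation-free. Alternatively, one could deduce the statement from the already-noted fact that $\{x - e~|~x \in X\}$ is an $R$-basis of $\Delta_{R^\circ}(X)$, by rewriting each $x - e = (x - x_0) - (e - x_0)$, but the direct kernel computation above seems cleanest.
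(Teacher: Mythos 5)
Your proof is correct and follows essentially the same route as the paper: both arguments take an arbitrary element of $\ker(\varepsilon)$, use the coefficient relation $\alpha + \sum_i \alpha_i = 0$ to absorb everything into $\Delta_R(X) + R(e - x_0)$ via the base point $x_0$, differing only in whether the $x_0$-term is grouped with the $e$-part or with the $R[X]$-part. You additionally spell out the easy inclusion $\Delta_R(X) + R(e - x_0) \subseteq \Delta_{R^\circ}(X)$, which the paper leaves implicit.
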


\begin{proof}
Let $u \in \Delta_{R^\circ}(X)$. Then $u= \sum_i\alpha_i x_i + \beta e$, where $\sum_i\alpha_i + \beta=0$ and $\alpha_i, \beta \in R$. Thus, we can rewrite $u=\sum_i\alpha_i (x_i-x_0) +\beta (e-x_0)$, where $\sum_i\alpha_i (x_i-x_0)  \in \Delta_R(X)$. Thus every element  $u \in \Delta_{R^\circ}(X)$ can be written as $u= a+ \beta(e-x_0)$, where $a \in \Delta_R(X)$ and $\beta \in R$.
\end{proof}

\begin{proposition}\label{sq-delta-o}
 If $X$ is a quandle, then $\Delta_{R^\circ}^2(X) = \Delta_{R^\circ}(X)$.
\end{proposition}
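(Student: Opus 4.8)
The plan is to prove the nontrivial inclusion $\Delta_{R^\circ}(X) \subseteq \Delta_{R^\circ}^2(X)$, since the reverse inclusion $\Delta_{R^\circ}^2(X) \subseteq \Delta_{R^\circ}(X)$ is automatic from the fact that $\Delta_{R^\circ}^2(X)$ is, by definition, contained in the ideal $\Delta_{R^\circ}(X)$. I would begin by recalling the two structural facts already available: first, $\{x - e \mid x \in X\}$ is an $R$-module basis for $\Delta_{R^\circ}(X)$; and second, $\Delta_{R^\circ}^2(X)$, being a two-sided ideal of $R^\circ[X]$, is in particular an $R$-submodule, as it is closed under left multiplication by the elements $re$ with $r \in R$. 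Together these reduce the whole statement to showing that each basis element $x - e$ already lies in $\Delta_{R^\circ}^2(X)$.

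The key step is then a single self-reproducing computation. Because $e$ is the unity of $R^\circ[X]$ we have $x \cdot e = e \cdot x = x$ and $e \cdot e = e$, while axiom (Q1) gives $x \cdot x = x$ in $R[X]$. Expanding the square of the element $x - e \in \Delta_{R^\circ}(X)$ therefore yields $(x-e)(x-e) = x \cdot x - x \cdot e - e \cdot x + e \cdot e = x - x - x + e = -(x-e)$. Hence $-(x-e) \in \Delta_{R^\circ}^2(X)$, and so $x - e \in \Delta_{R^\circ}^2(X)$ for every $x \in X$.

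Combining the two observations finishes the argument: every $R$-module generator of $\Delta_{R^\circ}(X)$ lies in the $R$-submodule $\Delta_{R^\circ}^2(X)$, whence $\Delta_{R^\circ}(X) \subseteq \Delta_{R^\circ}^2(X)$, and equality follows.

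I do not expect a genuine obstacle here; the only point requiring a moment's care is the reduction to the generating set, which rests on the elementary fact that the product ideal is an $R$-submodule. The essential content is the identity $(x-e)^2 = -(x-e)$, and it is worth emphasising that its availability hinges entirely on the presence of the unity $e$, which collapses $x \cdot e$ and $e \cdot x$ to $x$, together with the quandle idempotency $x \cdot x = x$. This stands in sharp contrast with the non-unital setting, where Theorem \ref{deltasqzero} shows that $\Delta_R^2(X)$ can even vanish for trivial quandles.
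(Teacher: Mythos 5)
Your proof is correct and follows essentially the same route as the paper: the key identity $(x-e)^2 = -(x-e)$, obtained from $x\cdot x = x$ and the unity of $e$, is exactly the paper's computation. You merely spell out more explicitly the reduction to the generating set $\{x-e \mid x\in X\}$, which the paper leaves implicit.
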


\begin{proof}
Obviously we have $\Delta_{R^\circ}^2(X)\subseteq \Delta_{R^\circ}(X)$. To prove the opposite inclusion,  notice that,
$\Delta_{R^\circ}^2(X)$ is generated by $\{(x_i - e). (x_j - e)~|~x_i, x_j \in X\}$. Taking $x_j = x_i$, we have
$$
(x_i - e). (x_i - e) = -(x_i - e) \in \Delta_{R^\circ}(X).
$$
Hence $\Delta_{R^\circ}(X) \subseteq \Delta_{R^\circ}^2(X)$,  and proposition is proved.
\end{proof}

\begin{remark}
If $X$ is a rack with $\Delta_{R^\circ}^2(X) = \Delta_{R^\circ}(X)$, then it is not necessarily a quandle. For example, if $X= \{a, b \}$ with the rack structure $a.a=a.b=b$ and $b.a=b.b=a$, then  $\Delta_{R^\circ}^2(X) = \Delta_{R^\circ}(X)$, but $X$ is not a quandle.
\end{remark}
\bigskip

\subsection{Units in extended rack rings}\label{section5.1}
Let $X$ be a rack and $R$ an associative ring with unity 1. Though the ring $R^\circ[X]$ has unity, it is non-associative, in general. Therefore, a natural problem is to determine maximal multiplicative subgroups of $R^\circ[X]$.

Let $\mathcal{U}(R^\circ[X])$ denote a maximal multiplicative subgroup of the ring $R^\circ[X]$. Notice that, $\varepsilon: R^\circ[X] \to R$ maps $\mathcal{U}(R^{\circ}[X])$ onto $R^*$, the group of units of $R$. Let
$$
\mathcal{U}_1(R^{\circ}[X]) := \big\{ r \in \mathcal{U}(R^{\circ}[X])~|~\varepsilon(r) = 1 \big\},
$$
be the subgroup of normalized units. Then $\mathcal{U}(R^{\circ}[X]) = R^* ~\mathcal{U}_1(R^{\circ}[X])$, and one only need to compute the group of normalized units. Define
$$
\mathcal{V}(R^{\circ}[X]) := \big\{ e+a \in \mathcal{U}(R^{\circ}[X]) ~|~a \in R[X] \big\}.
$$
Then it is not difficult to see that $\mathcal{V}(R^{\circ}[X])$ is a normal subgroup of $\mathcal{U}(R^{\circ}[X])$ and  $\mathcal{U}(R^{\circ}[X]) = R^* ~ \mathcal{V}(R^{\circ}[X])$. To understand  $\mathcal{V}(R^{\circ}[X])$ further, we define
$$\mathcal{V}_1 (R^{\circ}[X]) := \mathcal{U}_1(R^{\circ}[X]) \cap \mathcal{V}(R^{\circ}[X]).$$

\begin{proposition}
Let $X$ be a rack and $R$ an associative ring with unity. Then  $ \mathcal{V}_1 (R^{\circ}[X])= \big\{ e+a \in \mathcal{U}(R^{\circ}[X]) ~|~a \in \Delta_R(X) \big\}$ and
is a normal subgroup of $\mathcal{U}(R^{\circ}[X])$.
\end{proposition}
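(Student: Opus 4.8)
The plan is to establish the two assertions separately, both of which reduce to bookkeeping with the extended augmentation map $\varepsilon \colon R^{\circ}[X] \to R$, together with the normality facts already recorded just before the statement.

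First I would verify the explicit description of $\mathcal{V}_1(R^{\circ}[X])$. By definition $\mathcal{V}_1(R^{\circ}[X]) = \mathcal{U}_1(R^{\circ}[X]) \cap \mathcal{V}(R^{\circ}[X])$, so an element lies in it precisely when it has the form $e + a$ with $a \in R[X]$ (membership in $\mathcal{V}(R^{\circ}[X])$) and satisfies $\varepsilon(e + a) = 1$ (membership in $\mathcal{U}_1(R^{\circ}[X])$). Since $\varepsilon$ is a ring homomorphism and $e$ is the unity of $R^{\circ}[X]$, we have $\varepsilon(e) = 1$, and additivity gives $\varepsilon(e + a) = 1 + \varepsilon(a)$. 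Thus the normalization condition $\varepsilon(e+a) = 1$ is equivalent to $\varepsilon(a) = 0$, i.e. to $a \in \ker(\varepsilon|_{R[X]}) = \Delta_R(X)$. This yields the claimed equality $\mathcal{V}_1(R^{\circ}[X]) = \{ e + a \in \mathcal{U}(R^{\circ}[X]) \mid a \in \Delta_R(X) \}$; the reverse inclusion is immediate because $\Delta_R(X) \subseteq R[X]$, so such an $e+a$ automatically lies in both $\mathcal{V}(R^{\circ}[X])$ and $\mathcal{U}_1(R^{\circ}[X])$.

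For normality, I would avoid a direct conjugation computation and instead present $\mathcal{V}_1(R^{\circ}[X])$ as an intersection of two normal subgroups. The restriction $\varepsilon|_{\mathcal{U}(R^{\circ}[X])} \colon \mathcal{U}(R^{\circ}[X]) \to R^*$ is a group homomorphism, which the preceding discussion already notes is onto $R^*$; its kernel is exactly $\mathcal{U}_1(R^{\circ}[X])$, so $\mathcal{U}_1(R^{\circ}[X])$ is normal in $\mathcal{U}(R^{\circ}[X])$. The subgroup $\mathcal{V}(R^{\circ}[X])$ has already been observed to be normal in $\mathcal{U}(R^{\circ}[X])$. Since the intersection of two normal subgroups of a common group is again normal, $\mathcal{V}_1(R^{\circ}[X]) = \mathcal{U}_1(R^{\circ}[X]) \cap \mathcal{V}(R^{\circ}[X])$ is a normal subgroup of $\mathcal{U}(R^{\circ}[X])$.

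There is essentially no hard step here: the result follows formally once one records that $\varepsilon(e) = 1$, that $\mathcal{U}_1(R^{\circ}[X])$ is the kernel of the group homomorphism $\varepsilon|_{\mathcal{U}(R^{\circ}[X])}$, and that $\mathcal{V}(R^{\circ}[X])$ is normal. The only point deserving attention is that the intersection argument requires both factors to be normal in the common overgroup $\mathcal{U}(R^{\circ}[X])$, which is precisely what the previous paragraph supplies. In particular, the normality of $\mathcal{V}_1(R^{\circ}[X])$ is not cleanly obtained by a bare conjugation computation alone, since controlling the $e$-coefficient of a conjugate $u(e+a)u^{-1}$ already amounts to invoking the normality of $\mathcal{V}(R^{\circ}[X])$; reducing to the intersection of two established normal subgroups is therefore the most economical route.
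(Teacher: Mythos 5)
Your proof is correct, and the first half coincides with the paper's: both reduce the description of $\mathcal{V}_1(R^{\circ}[X])$ to the identity $\varepsilon(e+a)=1+\varepsilon(a)$, so that the normalization condition is exactly $a\in\ker(\varepsilon|_{R[X]})=\Delta_R(X)$. For normality you diverge from the paper. The paper conjugates directly: for $u\in\mathcal{U}(R^{\circ}[X])$ and $r=e+a\in\mathcal{V}_1(R^{\circ}[X])$ it computes $u^{-1}.r.u=e+u^{-1}.a.u$ and then applies $\varepsilon$ to see that $\varepsilon(u^{-1}.a.u)=\varepsilon(a)=0$, so the conjugate again has the required form. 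You instead exhibit $\mathcal{V}_1(R^{\circ}[X])=\mathcal{U}_1(R^{\circ}[X])\cap\mathcal{V}(R^{\circ}[X])$ as an intersection of two normal subgroups: $\mathcal{U}_1(R^{\circ}[X])$ is the kernel of the group homomorphism $\varepsilon|_{\mathcal{U}(R^{\circ}[X])}\colon\mathcal{U}(R^{\circ}[X])\to R^*$, and the normality of $\mathcal{V}(R^{\circ}[X])$ is recorded in the paper just before the statement. The two arguments rest on the same underlying facts --- that $\varepsilon$ is multiplicative and that $R[X]$ is a two-sided ideal of $R^{\circ}[X]$ (which is what makes $u^{-1}.a.u$ land back in $R[X]$, i.e.\ what makes $\mathcal{V}$ normal) --- but your packaging is the cleaner one: it isolates exactly which previously established facts are being used, whereas the paper's one-line conjugation computation silently re-proves the normality of $\mathcal{V}(R^{\circ}[X])$ and also glosses over the placement of parentheses in $u^{-1}.r.u$, which is harmless only because $\mathcal{U}(R^{\circ}[X])$ is by definition a multiplicative \emph{group} inside the generally non-associative ring. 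Your closing remark that a ``bare'' conjugation computation already amounts to invoking the normality of $\mathcal{V}(R^{\circ}[X])$ is an accurate diagnosis of what the paper's proof is implicitly doing.
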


\begin{proof}
Let $r = e + a \in \mathcal{V}(R^{\circ}[X])$. Then $\varepsilon(r) = 1 + \varepsilon(a)$. On the other side, if $r  \in \mathcal{U}_1(R^{\circ}[X])$, then  $\varepsilon(r)=1$. Hence $\varepsilon(a) = 0$, i. e. $a \in \Delta(R[X])$, and the first assertion is proved.

Let $u \in \mathcal{U}(R^{\circ}[X])$ and $r = e + a \in \mathcal{V}_1(R^{\circ}[X])$. Then $u^{-1}.r.u = e + u^{-1}. a .u$
and $\varepsilon(u^{-1}.r.u) = 1 + \varepsilon(u^{-1} .a .u)$, where $\varepsilon(u^{-1}. a. u)  = \varepsilon(a)=0$. Hence $u^{-1}.r.u \in \mathcal{V}_1 (R^{\circ}[X])$ proving the second assertion.
\end{proof}

Let $X$ be a rack and $x_0 \in X$ a fixed element. Define the set
$$
\mathcal{V}_2(R^{\circ}[X]) = \big\{ e + (\lambda - 1) x_0~|~\lambda \in R^* \}.
$$

\begin{proposition}\label{V_2-subgroup}
Let $X$ be a rack, $x_0 \in X$ a fixed element and $R$ an associative ring with unity. Then $\mathcal{V}_2(R^{\circ}[X])$ is a subgroup of $\mathcal{V}(R^{\circ}[X])$ and is isomorphic to $R^*$.
\end{proposition}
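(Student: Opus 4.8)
The plan is to realise $\mathcal{V}_2(R^\circ[X])$ as the image of an explicit injective multiplicative map out of $R^*$, and then read off both assertions at once. Concretely, I would define $\phi : R^* \to R^\circ[X]$ by $\phi(\lambda) = e + (\lambda - 1)x_0$, observe that its image is exactly $\mathcal{V}_2(R^\circ[X])$, and prove that $\phi$ is an injective homomorphism onto a multiplicative subgroup of $\mathcal{V}(R^\circ[X])$. Since $\{x \mid x \in X\} \cup \{e\}$ is an $R$-basis of $R^\circ[X]$, injectivity is immediate: $\phi(\lambda) = \phi(\mu)$ forces $\lambda - 1 = \mu - 1$. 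Surjectivity of $\phi$ onto $\mathcal{V}_2(R^\circ[X])$ holds by definition, so it remains to check that $\phi$ respects multiplication and that the image genuinely sits inside $\mathcal{V}(R^\circ[X])$.

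The heart of the argument is a single product computation. Writing $r_\lambda = e + (\lambda - 1)x_0$ and using that $e$ is the unity of $R^\circ[X]$ together with bilinearity of the multiplication, I would expand
\[
r_\lambda \, r_\mu = e + \big[(\lambda - 1) + (\mu - 1)\big]x_0 + (\lambda - 1)(\mu - 1)\,(x_0 . x_0).
\]
The only nonlinear contribution is the term $(\lambda-1)(\mu-1)(x_0 . x_0)$, and this is exactly the step on which the whole statement turns. Using $x_0 . x_0 = x_0$, the coefficient of $x_0$ collapses to $(\lambda - 1) + (\mu - 1) + (\lambda - 1)(\mu - 1) = \lambda\mu - 1$, whence $r_\lambda r_\mu = e + (\lambda\mu - 1)x_0 = r_{\lambda\mu} = \phi(\lambda\mu)$, so $\phi$ is a homomorphism. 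I expect this to be the main obstacle to watch: the reduction hinges on the idempotency $x_0 . x_0 = x_0$, so this is precisely the place where one must use that the base point behaves like a quandle idempotent. If $x_0$ were not idempotent, the quadratic term would not absorb into a scalar multiple of $x_0$ and closure of $\mathcal{V}_2(R^\circ[X])$ could fail, so keeping careful track of that term is the crux.

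With multiplicativity in hand the remaining points are routine. Taking $\lambda = 1$ gives $\phi(1) = e$, so the identity lies in the image; and $\phi(\lambda)\phi(\lambda^{-1}) = \phi(\lambda\lambda^{-1}) = \phi(1) = e$ shows each $r_\lambda$ is a two-sided unit with $r_\lambda^{-1} = r_{\lambda^{-1}} \in \mathcal{V}_2(R^\circ[X])$. Hence $\mathcal{V}_2(R^\circ[X])$ is closed under multiplication and inversion and contains $e$, so it is a multiplicative group; as each of its elements is a unit of the form $e + a$ with $a = (\lambda - 1)x_0 \in R[X]$, it is a subgroup of $\mathcal{V}(R^\circ[X])$. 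Finally, $\phi$ being a bijective homomorphism yields $\mathcal{V}_2(R^\circ[X]) \cong R^*$, settling both claims. The only genuinely nontrivial ingredient in this plan is the collapse of the quadratic term above; everything else is bookkeeping with the basis $\{x \mid x \in X\} \cup \{e\}$.
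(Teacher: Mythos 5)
Your proof is correct and is essentially the paper's own argument: the paper likewise checks $r_\lambda r_\mu = r_{\lambda\mu}$ and $r_\lambda^{-1} = r_{\lambda^{-1}}$ and concludes via the bijection $\lambda \mapsto e + (\lambda-1)x_0$, so your explicit homomorphism $\phi$ is just a cleaner packaging of the same computation. You are right to flag $x_0 . x_0 = x_0$ as the crux, and in fact this exposes an issue with the statement itself: the proposition is asserted for \emph{racks}, where idempotency is not an axiom, and for the two-element rack with $a_i . a_j = a_{3-i}$ and $x_0 = a_1$ the quadratic term $(\lambda-1)(\mu-1)(x_0 . x_0)$ lands on $a_2$, so $\mathcal{V}_2(R^\circ[X])$ is not even closed under multiplication once $|R^*|>1$. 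So both your argument and the paper's implicitly require $X$ to be a quandle (or at least $x_0$ to satisfy $x_0 . x_0 = x_0$); with that hypothesis added, your write-up is complete.
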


\begin{proof}
Notice that, if $r = e + (\lambda - 1) x_0 \in \mathcal{V}_2(R^{\circ}[X])$, then $r^{-1} = e + (\lambda^{-1} - 1) x_0 \in \mathcal{V}_2(R^{\circ}[X])$, and hence $r \in \mathcal{V}(R^{\circ}[X])$. Further, if $r = e + (\lambda - 1) x_0$ and  $s= e + (\mu - 1) x_0$, then $rs= e + (\lambda\mu - 1) x_0$. Hence $\mathcal{V}_2(R^{\circ}[X])$ is a subgroup of $\mathcal{V}(R^{\circ}[X])$. Finally, the map $\lambda \mapsto e + (\lambda - 1) x_0$ gives an isomorphism of $R^*$ onto $\mathcal{V}_2(R^{\circ}[X])$.
\end{proof}

\begin{theorem}\label{split-short-seq}
Let $X$ be a rack, $x_0 \in X$ a fixed element and $R$ an associative ring with unity. Then there exists a split exact sequence
$$
1 \longrightarrow \mathcal{V}_1(R^{\circ}[X]) \longrightarrow \mathcal{V}(R^{\circ}[X]) \longrightarrow \mathcal{V}_2(R^{\circ}[X]) \longrightarrow 1.
$$
\end{theorem}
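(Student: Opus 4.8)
The plan is to exhibit the right-hand map in the sequence explicitly as a retraction of $\mathcal{V}(R^{\circ}[X])$ onto its subgroup $\mathcal{V}_2(R^{\circ}[X])$, so that splitness comes for free from the inclusion of $\mathcal{V}_2(R^{\circ}[X])$ as a section. Since $\mathcal{V}(R^{\circ}[X]) \subseteq \mathcal{U}(R^{\circ}[X])$ and the augmentation $\varepsilon$ carries $\mathcal{U}(R^{\circ}[X])$ onto $R^*$, every element $e + a \in \mathcal{V}(R^{\circ}[X])$ has $\lambda := \varepsilon(e + a) = 1 + \varepsilon(a) \in R^*$. I would therefore define
$$
\rho : \mathcal{V}(R^{\circ}[X]) \longrightarrow \mathcal{V}_2(R^{\circ}[X]), \qquad \rho(e + a) = e + (\lambda - 1)x_0,
$$
using the isomorphism $R^* \cong \mathcal{V}_2(R^{\circ}[X])$ from Proposition \ref{V_2-subgroup}. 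This is well defined because $e + a$ determines $a$, hence $\lambda$, uniquely.

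First I would check that $\rho$ is a group homomorphism. Given $e + a$ and $e + b$ in $\mathcal{V}(R^{\circ}[X])$ with augmentations $\lambda$ and $\mu$, multiplicativity of $\varepsilon$ gives $\varepsilon\big((e+a).(e+b)\big) = \lambda\mu$, so $\rho\big((e+a).(e+b)\big) = e + (\lambda\mu - 1)x_0$; on the other hand the product rule $\big(e + (\lambda-1)x_0\big).\big(e + (\mu-1)x_0\big) = e + (\lambda\mu - 1)x_0$ recorded in the proof of Proposition \ref{V_2-subgroup} shows $\rho(e+a).\rho(e+b) = e + (\lambda\mu - 1)x_0$ as well. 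Hence $\rho$ is a homomorphism.

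Next I would identify the kernel. We have $\rho(e+a) = e$ exactly when $\lambda = 1$, that is $\varepsilon(a) = 0$, that is $a \in \Delta_R(X)$; by the description of $\mathcal{V}_1(R^{\circ}[X])$ this says $\ker \rho = \mathcal{V}_1(R^{\circ}[X])$, which is in particular normal. Finally, surjectivity and the splitting are obtained simultaneously: since $\mathcal{V}_2(R^{\circ}[X]) \subseteq \mathcal{V}(R^{\circ}[X])$ and $\varepsilon\big(e + (\lambda-1)x_0\big) = \lambda$, the map $\rho$ restricts to the identity on $\mathcal{V}_2(R^{\circ}[X])$, so the inclusion $\iota : \mathcal{V}_2(R^{\circ}[X]) \hookrightarrow \mathcal{V}(R^{\circ}[X])$ satisfies $\rho \circ \iota = \id$. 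This makes the sequence
$$
1 \longrightarrow \mathcal{V}_1(R^{\circ}[X]) \longrightarrow \mathcal{V}(R^{\circ}[X]) \stackrel{\rho}{\longrightarrow} \mathcal{V}_2(R^{\circ}[X]) \longrightarrow 1
$$
split exact.

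I do not expect a serious obstacle here: the only point requiring care is that $\rho$ genuinely lands in $\mathcal{V}_2(R^{\circ}[X])$ rather than merely in $R^*$, which rests on the fact that $\varepsilon\big(\mathcal{U}(R^{\circ}[X])\big) = R^*$ so that $\lambda$ is invertible, together with the multiplicativity of $\varepsilon$ surviving in the non-associative ring $R^{\circ}[X]$. Since $\varepsilon$ is a genuine ring homomorphism, the homomorphism check goes through despite the non-associativity of the ambient ring, and no associativity of products in $\mathcal{U}(R^{\circ}[X])$ beyond what the subgroup structure already guarantees is invoked.
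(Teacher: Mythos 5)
Your proof is correct and follows essentially the same route as the paper: the map you call $\rho$ is exactly the paper's $\varphi(v)=e+(\varepsilon(v)-1)x_0$, the kernel identification is identical, and the splitting comes from the inclusion of $\mathcal{V}_2(R^{\circ}[X])$ exactly as in the paper (via Proposition \ref{V_2-subgroup}). The only difference is cosmetic: you verify the homomorphism property via multiplicativity of $\varepsilon$, whereas the paper expands the product $v.u$ explicitly and reads off the coefficient of $x_0$.
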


\begin{proof}
Let $v = e + a \in \mathcal{V}(R^{\circ}[X])$. Then $\varepsilon(v) = 1 + \varepsilon(a) \in R^*$. Since $a \in R[X]$, we have $a = \sum_{i=0}^n \alpha_i x_i$ for  $\alpha_i \in R$, $ x_i \in X$ and $\varepsilon(a) = \sum_{i=0}^n \alpha_i$. Rewrite $a$ in the form
$$
a = \sum_{i=1}^n \alpha_i (x_i - x_0) +
\left( \sum_{i=0}^n \alpha_i \right) x_0 = \sum_{i=1}^n \alpha_i (x_i - x_0) + (\varepsilon(v) - 1) x_0,
$$
where
$$
\sum_{i=1}^n \alpha_i (x_i - x_0) \in \Delta_R(X)~\textrm{and}~\varepsilon(v) \in R^*.
$$
Hence, we can write
$$
v = e + a_0 + (\varepsilon(v) - 1) x_0,~~~a_0 \in \Delta_R(X).
$$
Now define a map
$$
\varphi : \mathcal{V}(R^{\circ}[X]) \longrightarrow \mathcal{V}_2(R^{\circ}[X])
$$
by the rule $\varphi(e + a_0 + (\varepsilon(v) - 1) x_0) = e +  (\varepsilon(v) - 1) x_0$. If $v = e + a_0 + (\varepsilon(v) - 1) x_0$ and $u = e + b_0 + (\varepsilon(u) - 1) x_0$ are elements of $\mathcal{V}(R^{\circ}[X])$, then
$$
v.u = e + a_0 + b_0 + (\varepsilon(v) - 1) x_0.b_0 + (\varepsilon(u) - 1) a_0.x_0 + a_0.b_0 + (\varepsilon(v) \varepsilon(u) - 1) x_0
$$
and $\varphi(v.u)= e +  (\varepsilon(v) \varepsilon(u) - 1) x_0=\varphi(v).\varphi(u)$. Clearly, $\varphi$ is surjective and $\ker(\varphi) =\mathcal{V}_1(R^{\circ}[X])$. Thus we obtain the short exact sequence
$$1 \longrightarrow \mathcal{V}_1(R^{\circ}[X]) \longrightarrow \mathcal{V}(R^{\circ}[X]) \longrightarrow \mathcal{V}_2(R^{\circ}[X]) \longrightarrow 1,$$
which splits by Proposition \ref{V_2-subgroup}.
\end{proof}

As  a consequence, we have the following.

\begin{corollary}
Let $X$ be a rack, $x_0 \in X$ and $R$ an associative ring with unity.  Then an arbitrary element $u = e + a_0 + (\lambda - 1)x_0 \in \mathcal{V}(R^{\circ}[X])$, $\lambda \in R*$,  is the product $u = u_1 u_2$, where $u_1 = e + a_0\big(e+(\lambda^{-1} - 1) x_0\big) \in \mathcal{V}_1(R^{\circ}[X])$ and $u_2 = e + (\lambda - 1) x_0 \in \mathcal{V}_2(R^{\circ}[X])$.
\end{corollary}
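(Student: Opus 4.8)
The plan is to read this corollary as the explicit, element-level form of the factorization already encoded by the split exact sequence of Theorem \ref{split-short-seq}: the abstract splitting says every $u \in \mathcal{V}(R^{\circ}[X])$ is a product of a normalized unit in $\mathcal{V}_1(R^{\circ}[X])$ and its image in $\mathcal{V}_2(R^{\circ}[X])$, and all that remains is to compute these two factors for the given $u$. The one standing caveat is that $R^{\circ}[X]$ is non-associative, so I must confine every appeal to associativity to elements that genuinely lie in the multiplicative group $\mathcal{V}(R^{\circ}[X])$, where associativity is automatic, while freely using the distributive law, which holds in the whole ring.

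First I would set $u_2 = e + (\lambda - 1)x_0$ and invoke Proposition \ref{V_2-subgroup} to place $u_2$ in $\mathcal{V}_2(R^{\circ}[X]) \subseteq \mathcal{V}(R^{\circ}[X])$ with inverse $u_2^{-1} = e + (\lambda^{-1} - 1)x_0$. Since $u \in \mathcal{V}(R^{\circ}[X])$ by hypothesis, the element $u_1 := u\,u_2^{-1}$ again lies in the group $\mathcal{V}(R^{\circ}[X])$, and the desired factorization is recovered by associativity inside the group: $u_1 u_2 = (u\,u_2^{-1})u_2 = u(u_2^{-1}u_2) = u$. To see that $u_1$ is normalized, I apply the ring homomorphism $\varepsilon$: since $a_0 \in \Delta_R(X)$ and $\varepsilon(x_0)=1$ give $\varepsilon(u) = 1 + \varepsilon(a_0) + (\lambda - 1) = \lambda$, we get $\varepsilon(u_1) = \varepsilon(u)\varepsilon(u_2^{-1}) = \lambda\lambda^{-1} = 1$, so $u_1 \in \mathcal{V}_1(R^{\circ}[X])$ as claimed.

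Finally, to match the stated closed form, I would regroup $u = a_0 + u_2$ purely as an equality of sums in the additive group (this is just rewriting $e + a_0 + (\lambda-1)x_0$), and then expand $u_1$ using distributivity alone, so that no associativity across the non-unit $a_0$ is invoked: $u_1 = u\,u_2^{-1} = (a_0 + u_2)u_2^{-1} = a_0 u_2^{-1} + u_2 u_2^{-1} = e + a_0 u_2^{-1}$, which is exactly $e + a_0\big(e + (\lambda^{-1} - 1)x_0\big)$. The only point demanding care — and the step I expect to be the main (if mild) obstacle — is precisely this bookkeeping of non-associativity: the cancellation $u_2 u_2^{-1} = e$ and the regrouping $(u\,u_2^{-1})u_2 = u$ must each be justified as group identities in $\mathcal{V}(R^{\circ}[X])$, and one must resist the temptation to ``simplify'' expressions such as $(a_0 u_2^{-1})u_2$ by associating them, which is illegitimate here.
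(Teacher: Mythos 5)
Your proof is correct and follows the route the paper intends: the corollary is stated without proof as a consequence of Theorem \ref{split-short-seq}, and your argument --- set $u_2^{-1}=e+(\lambda^{-1}-1)x_0$ via Proposition \ref{V_2-subgroup}, define $u_1=u\,u_2^{-1}$ inside the group $\mathcal{V}(R^{\circ}[X])$ where associativity is available, check $\varepsilon(u_1)=1$, and expand $u_1=(a_0+u_2)u_2^{-1}=e+a_0 u_2^{-1}$ by distributivity alone --- is exactly the intended deduction. Your explicit care to restrict associativity to elements of the unit group, rather than to arbitrary ring elements such as $a_0$, is the right and necessary precaution.
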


Since $\mathcal{V}_2(R^{\circ}[X]) \cong R^*$, it follows from Theorem \ref{split-short-seq} that if we know the structure of $\mathcal{V}_1(R^{\circ}[X]$, then we can determine the structure of $\mathcal{V}(R^{\circ}[X]$. The preceding discussion can be summarised in the following digram.

$$
\xymatrix{
& \mathcal{U}(R^{\circ}[X]) &  \\
\mathcal{U} _1(R^{\circ}[X])   \ar[ru] &  &  \ar[lu] \mathcal{V}(R^{\circ}[X])= \mathcal{V}_1(R^{\circ}[X]) \oplus \mathcal{V}_2(R^{\circ}[X])  \\
&  \mathcal{V}_1(R^{\circ}[X]) \ar[ru]  \ar[lu]&  }
$$

\bigskip

\subsection{Units in extended rack rings of trivial racks}\label{section5.2}
 Notice that, if $\T$ is a trivial rack and $R$ an associative ring with unity,  then the ring $R^\circ[\T]$ is associative with  unity, and hence $\mathcal{U}(R^\circ[\T])$ is precisely the group of units of $R^\circ[\T]$.  As observed in the preceding section, the main problem in determining $\mathcal{U}(R^{\circ}[T])$ is the description of $\mathcal{V}_1(R^{\circ}[T])$.

\begin{proposition}
Let $\T$ be a trivial rack and $R$ an associative ring with unity. Then
$$
\mathcal{V}_1(R^{\circ}[\T])  = \big\{ e+a  ~|~a \in \Delta_R(\T) \big\}
$$
is an abelian subgroup of $\mathcal{U}_1(R^{\circ}[\T])$. Further, $\mathcal{V}_1(R^{\circ}[\T]) \cong \Delta_R(\T)$.
\end{proposition}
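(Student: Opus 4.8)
The plan is to leverage the single fact, already recorded in the remark following Theorem~\ref{deltasqzero}, that $\Delta_R^2(\T) = \{0\}$ whenever $\T$ is a trivial rack. This makes the multiplication on elements of the form $e + a$ with $a \in \Delta_R(\T)$ collapse into mere addition, and all three assertions then fall out essentially for free. I would also use at the outset that, as noted at the start of this subsection, $R^\circ[\T]$ is an \emph{associative} ring with unity $e$ because $\T$ is trivial.

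First I would compute the product of two elements $e + a$ and $e + b$ with $a, b \in \Delta_R(\T)$. Expanding in the associative ring $R^\circ[\T]$ gives $(e+a)(e+b) = e + a + b + a.b$, and since $a, b \in \Delta_R(\T)$ while $\Delta_R^2(\T) = \{0\}$, the cross term $a.b$ vanishes, leaving $(e+a)(e+b) = e + (a+b)$. From this formula three consequences are immediate: the set $\{e+a \mid a \in \Delta_R(\T)\}$ is closed under multiplication, since $a + b \in \Delta_R(\T)$; the multiplication is commutative, since $a + b = b + a$; and each $e + a$ is invertible with two-sided inverse $e - a$, because $(e+a)(e-a) = e - a.a = e$ using $a.a \in \Delta_R^2(\T) = \{0\}$, with $-a$ again lying in $\Delta_R(\T)$. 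In particular every $e + a$ with $a \in \Delta_R(\T)$ genuinely belongs to $\mathcal{U}(R^\circ[\T])$, so the invertibility condition appearing in the earlier description of $\mathcal{V}_1(R^\circ[\T])$ as $\{e + a \in \mathcal{U}(R^\circ[\T]) \mid a \in \Delta_R(\T)\}$ is automatically met. This identifies $\mathcal{V}_1(R^\circ[\T])$ with the full set $\{e + a \mid a \in \Delta_R(\T)\}$ and shows it is an abelian subgroup, settling the first assertion.

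For the isomorphism I would introduce the map $\phi \colon (\Delta_R(\T), +) \to \mathcal{V}_1(R^\circ[\T])$ defined by $\phi(a) = e + a$. By the description just obtained it is a bijection onto $\mathcal{V}_1(R^\circ[\T])$, and the product formula $(e+a)(e+b) = e + (a+b)$ says precisely that $\phi(a)\phi(b) = \phi(a+b)$, so $\phi$ is a group isomorphism. I do not expect a genuine obstacle here; the only step requiring care is to invoke $\Delta_R^2(\T) = \{0\}$ correctly, noting that it is this vanishing that both kills the cross term $a.b$ and, in the special case $a.a = 0$, validates $e - a$ as the inverse. Everything else is routine manipulation in an associative unital ring.
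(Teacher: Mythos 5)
Your proof is correct and follows essentially the same route as the paper's: both use $\Delta_R^2(\T)=\{0\}$ to reduce the product $(e+a)(e+b)$ to $e+(a+b)$, exhibit $e-a$ as the inverse, and read off the isomorphism with $(\Delta_R(\T),+)$ from that product formula. Your explicit remark that every $e+a$ with $a\in\Delta_R(\T)$ is automatically a unit (so the set really is all of $\{e+a \mid a\in\Delta_R(\T)\}$) is a small point the paper leaves implicit, but it is not a different approach.
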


\begin{proof}
Let $a, b \in \Delta_R(\T)$. Then $(e+a).(e+b)=e+a+b+a.b=e+a+b=(e+b).(e+a)$, since $ \Delta_R^2(\T)=\{0\}$ by Theorem \ref{deltasqzero}. This implies that  $\mathcal{V}_1(R^{\circ}[\T])$   is closed under multiplication and is abelian. Further, $(e+a).(e-a)=e+a-a+0=e$ shows that  $\mathcal{V}_1(R^{\circ}[\T])$   is an abelian subgroup of $\mathcal{U}_1(R^{\circ}[X])$. Finally, the map $e+a \mapsto a$ gives the desired isomorphism of $\mathcal{V}_1(R^{\circ}[\T])$ onto the additive group $\Delta_R(\T)$.
\end{proof}

More generally, we prove the following.

\begin{theorem}\label{unit-trivial-rack}
Let $\T$ be a trivial rack, $x_0 \in \T$ and $R$ an associative ring with unity. Then $\mathcal{U}_1(R^{\circ}[\T])=\big\{e+a +\alpha(x_0- e)~|~a \in \Delta_R(\T)~\textrm{and}~\alpha-1 \in R^* \big\}.$
\end{theorem}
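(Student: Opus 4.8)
The plan is to reduce everything to a single structural fact: for a trivial rack the product of two basis elements other than $e$ never contributes to the $e$-coordinate, so the projection onto $Re$ is a ring homomorphism. Concretely, I would write each element of $R^\circ[\T]$ uniquely as $u = \beta e + c$ with $\beta \in R$ and $c \in R[\T]$, using the direct sum $R^\circ[\T] = Re \oplus R[\T]$, and set $\pi(u) = \beta$. Since $x_i . x_j = x_i$ for all $x_i, x_j \in \T$, the only way a product can produce an $e$-term is from $e \cdot e$; hence $\pi$ is additive, sends $e$ to $1$, and satisfies $\pi(uv) = \pi(u)\pi(v)$, so that $\pi \colon R^\circ[\T] \to R$ is a ring homomorphism. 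Recall also that for trivial $\T$ the ring $R^\circ[\T]$ is associative, so $\mathcal{U}(R^\circ[\T])$ is its genuine group of units.

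First I would establish necessity. If $u = \beta e + c \in \mathcal{U}_1(R^\circ[\T])$, then $\varepsilon(u) = 1$ and $u$ has a two-sided inverse $v$ with $uv = vu = e$. Applying the homomorphism $\pi$ gives $\pi(u)\pi(v) = \pi(v)\pi(u) = 1$, so $\beta = \pi(u) \in R^*$. Writing $\alpha = 1 - \beta$ and $a = c - \alpha x_0$, the normalization $\varepsilon(u) = \beta + \varepsilon(c) = 1$ forces $\varepsilon(c) = \alpha$, whence $\varepsilon(a) = 0$, i.e. $a \in \Delta_R(\T)$; moreover $u = (1-\alpha)e + c = e + a + \alpha(x_0 - e)$ with $\alpha - 1 = -\beta \in R^*$. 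This shows $\mathcal{U}_1(R^\circ[\T])$ is contained in the asserted set.

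For sufficiency I would run the computation in reverse and exhibit an explicit inverse. Given $u = e + a + \alpha(x_0 - e)$ with $a \in \Delta_R(\T)$ and $\alpha - 1 \in R^*$, set $\beta = 1 - \alpha \in R^*$ and $c = a + \alpha x_0$, so that $u = \beta e + c$ with $c \in R[\T]$, $\varepsilon(c) = \alpha$ and $\varepsilon(u) = 1$. I claim that $v = \beta^{-1}(e - c)$ is a two-sided inverse of $u$. The verification uses only the trivial-rack identity $x_i . x_j = x_i$, which yields $c\,d = \sum_i \gamma_i\,\varepsilon(d)\,x_i$ for $c = \sum_i \gamma_i x_i$ and any $d \in R[\T]$; expanding $uv$ and $vu$ and keeping the left/right placement of scalars straight, the $e$-coefficient reduces to $\beta\beta^{-1} = 1$ and every $x_i$-coefficient reduces to $0$. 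Hence $uv = vu = e$, so $u$ is a unit, and since $\varepsilon(u) = 1$ it lies in $\mathcal{U}_1(R^\circ[\T])$. Combining the two inclusions proves the theorem.

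The only place demanding care, and the main obstacle, is the sufficiency computation when $R$ is noncommutative: coefficients of the left factor multiply those of the right factor in that fixed order, so the verification that $v = \beta^{-1}(e - c)$ is simultaneously a left and a right inverse is not symmetric on its face and must be checked on both sides separately. By contrast, the homomorphism property of $\pi$ trivializes the necessity direction, so no genuine difficulty arises there.
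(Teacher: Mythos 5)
Your proof is correct. The sufficiency half is essentially the paper's: your inverse $v=\beta^{-1}(e-c)$ with $\beta=1-\alpha$ is, after unpacking $c=a+\alpha x_0$, exactly the element $\bar u=e+\frac{1}{\alpha-1}a+\frac{\alpha}{\alpha-1}(x_0-e)$ that the paper exhibits, just written in the decomposition $R^{\circ}[\T]=Re\oplus R[\T]$ instead of in the coordinates $e$, $x_0-e$, $\Delta_R(\T)$; and you rightly insist on verifying $uv=e$ and $vu=e$ separately when $R$ is noncommutative. The necessity half is genuinely different. The paper posits an inverse of the shape $\bar u=e+b+\beta(x_0-e)$, expands $u.\bar u=e$, and reads off the coefficient equations $a=(\alpha-1)b$ and $\alpha+\beta=\alpha\beta$, from which it concludes $\alpha-1\in R^*$. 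You instead observe that the projection $\pi$ onto the $Re$-summand is a ring homomorphism (a fact that actually holds for any rack, not just trivial ones, since a product of rack elements never produces an $e$-term) and apply it to $uv=vu=e$. This buys you something concrete: $\pi(v)$ is automatically a \emph{two-sided} inverse of $\pi(u)=1-\alpha$ in $R$, whereas the paper's equation $(\alpha-1)\beta=\alpha$, coming from the right inverse alone, only gives $(\alpha-1)(\beta-1)=1$, i.e.\ right invertibility; for noncommutative $R$ one must also use $\bar u.u=e$ to conclude $\alpha-1\in R^*$, a point the paper glosses over by writing $\beta=\alpha/(\alpha-1)$. What the paper's expansion buys in return is that it \emph{derives} the form of the inverse ($b=(\alpha-1)^{-1}a$, $\beta=(\alpha-1)^{-1}\alpha$) rather than having to guess it, as you do.
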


\begin{proof}
Let $u \in \mathcal{U}_1(R^\circ[\T])$. Then $\varepsilon(u)= 1$, and hence $-e+u \in \Delta_R^\circ(\T)$. This implies that $u=e+w$ for some $w=\sum_i\alpha_i (x_i- e) \in \Delta_R^\circ(\T)$. Now let $x_0 \in \T$ be a fixed element. Then we can write $$u=e+\sum_i\alpha_i (x_i- e)=e+\sum_{i \neq 0} \alpha_i (x_i- x_0)+ \sum_i \alpha_i (x_0- e),$$
where $\sum_{i \neq 0} \alpha_i (x_i- x_0) \in \Delta_R(\T)$. Thus we have written $u=e+a + \alpha (x_0- e)$ for some $a \in  \Delta_R(\T)$ and $\alpha \in R$. Since $u$ is a unit, there exists an element $\bar{u}=e+b+ \beta  (x_0- e)$ such that $u .\bar{u}=1$. Then equating the two sides gives
$$a+(1-\alpha)b +(\alpha+\beta-\alpha \beta)(x_0-e)=0,$$
which in turn gives $a= (\alpha-1)b$ and $\alpha+\beta=\alpha \beta$. Clearly, $\alpha \neq 1$, and hence the only solution is $\beta=\frac{\alpha}{\alpha-1}$, which is possible  iff that $\alpha-1 \in R^*$.

Conversely, consider an element $u=e+a +\alpha(x_0- e)$, where $a \in \Delta_R(\T)$ and $\alpha-1 \in R^*$. Taking $\bar{u}=  e+\frac{1}{\alpha-1}a +\frac{\alpha}{\alpha-1}(x_0- e)$, we easily see that $u .\bar{u}=\bar{u} .u=1$. Hence $\bar{u}$ is the inverse of $u$, and the proof is complete.
\end{proof}

As a consequence, we have the following  for integral coefficients.

\begin{corollary}
Let $\T$ be a trivial rack, $x_0 \in X$ and $R$ an associative ring with unity. Then the following statements hold:
\begin{enumerate}
\item  $\mathcal{U}(\mathbb{Z}^{\circ}[\T])=\pm \big\{e+a +\alpha(x_0- e)~|~a \in \Delta_R(\T)~\textrm{and}~\alpha=0,2 \big\}$.
\item If  $\T_1 = \{x_0\}$,  then $\mathcal{U}(\mathbb{Z}^{\circ}[\T_1]) = \{ \pm e, 2 x - e, -2 x + e \} \cong \mathbb{Z}_2 \oplus \mathbb{Z}_2$.
\end{enumerate}
\end{corollary}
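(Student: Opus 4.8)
The plan is to derive both statements as direct specialisations of Theorem \ref{unit-trivial-rack} to integral coefficients, the only genuine content being the translation of the unit condition on the scalar and the unpacking of the coset decomposition $\mathcal{U}(R^{\circ}[X]) = R^*\,\mathcal{U}_1(R^{\circ}[X])$ established in Section \ref{section5.1}. Since $\T$ is a trivial rack, $\mathbb{Z}^{\circ}[\T]$ is associative, so $\mathcal{U}(\mathbb{Z}^{\circ}[\T])$ is literally the group of units. First I would record that $\mathbb{Z}^* = \{\pm 1\}$, realised inside the ring as $\{\pm e\}$, whence $\mathcal{U}(\mathbb{Z}^{\circ}[\T]) = \{\pm e\}\cdot \mathcal{U}_1(\mathbb{Z}^{\circ}[\T]) = \pm\,\mathcal{U}_1(\mathbb{Z}^{\circ}[\T])$: any unit $u$ has $\varepsilon(u) \in \{\pm 1\}$, and $\varepsilon(u) = -1$ forces $-u \in \mathcal{U}_1(\mathbb{Z}^{\circ}[\T])$, so $u = (-e)(-u)$.

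For statement (1), I would invoke Theorem \ref{unit-trivial-rack} with $R = \mathbb{Z}$. The requirement $\alpha - 1 \in \mathbb{Z}^*$ there becomes $\alpha - 1 \in \{\pm 1\}$, that is $\alpha \in \{0, 2\}$. Hence $\mathcal{U}_1(\mathbb{Z}^{\circ}[\T])$ is exactly the set of elements $e + a + \alpha(x_0 - e)$ with $a \in \Delta_{\mathbb{Z}}(\T)$ and $\alpha \in \{0,2\}$, and multiplying by $\{\pm e\}$ reproduces the asserted description of $\mathcal{U}(\mathbb{Z}^{\circ}[\T])$ verbatim.

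For statement (2), I would specialise further to the one-element rack $\T_1 = \{x_0\}$, writing $x$ for $x_0$. Here $\Delta_{\mathbb{Z}}(\T_1) = \{0\}$, so the $a$-term vanishes and (1) leaves only the normalized units $e$ (from $\alpha = 0$) and $e + 2(x - e) = 2x - e$ (from $\alpha = 2$); applying $\{\pm e\}$ produces the four elements $\{\pm e,\ 2x - e,\ -2x + e\}$. To name the group, I would use the trivial-quandle relation $x.x = x$, giving $x^2 = x$ and $xe = ex = x$ in the ring, so that $(2x - e)^2 = 4x^2 - 4x + e = 4x - 4x + e = e$ and of course $(-e)^2 = e$. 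Thus every nonidentity element of this order-four group is an involution, forcing $\mathcal{U}(\mathbb{Z}^{\circ}[\T_1]) \cong \mathbb{Z}_2 \oplus \mathbb{Z}_2$. Alternatively one may note $\mathbb{Z}^{\circ}[\T_1] \cong \mathbb{Z}[t]/(t^2 - t) \cong \mathbb{Z} \times \mathbb{Z}$, whose unit group is plainly $\{\pm 1\} \times \{\pm 1\}$.

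I do not anticipate a serious obstacle: given Theorem \ref{unit-trivial-rack}, both parts are brief. The two points meriting care are the honest unpacking of $\mathcal{U} = R^*\,\mathcal{U}_1$ over $\mathbb{Z}$ (ensuring the $\pm$ captures augmentation $-1$ units without double counting) and, in (2), distinguishing $\mathbb{Z}_2 \oplus \mathbb{Z}_2$ from the cyclic group $\mathbb{Z}_4$, which is precisely what the computation $(2x - e)^2 = e$ settles.
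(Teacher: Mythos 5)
Your proposal is correct and follows exactly the route the paper intends: the corollary is stated there as an immediate consequence of Theorem \ref{unit-trivial-rack}, obtained by specialising to $R=\mathbb{Z}$ so that $\alpha-1\in\mathbb{Z}^*=\{\pm1\}$ forces $\alpha\in\{0,2\}$, and then using $\mathcal{U}=\{\pm e\}\,\mathcal{U}_1$. Your added verification that $(2x-e)^2=e$ (equivalently, $\mathbb{Z}^{\circ}[\T_1]\cong\mathbb{Z}\times\mathbb{Z}$), which pins down $\mathbb{Z}_2\oplus\mathbb{Z}_2$ rather than $\mathbb{Z}_4$, is a detail the paper leaves implicit but is exactly the right check.
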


Next we consider nilpotency of $\mathcal{V}(R^{\circ}[\T])$. Obviously, $\mathcal{V}(R^{\circ}[\T_1])$ is nilpotent. In the general case, we prove the following.

\begin{proposition}
Let $R$ be an associative ring with unity such that $|R^*| > 1$. If $\T$ is a trivial rack with more than one element, then $\mathcal{V}(R^{\circ}[\T])$ is not nilpotent.
\end{proposition}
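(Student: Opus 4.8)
The plan is to realise $\mathcal{V}(R^{\circ}[\T])$ as a semidirect product and then to track a lower central series that never reaches the identity. By Theorem \ref{split-short-seq} together with the corollary following it, $\mathcal{V}(R^{\circ}[\T]) = \mathcal{V}_1(R^{\circ}[\T]) \rtimes \mathcal{V}_2(R^{\circ}[\T])$, where $\mathcal{V}_1(R^{\circ}[\T]) \cong \big(\Delta_R(\T), +\big)$ is abelian and $\mathcal{V}_2(R^{\circ}[\T]) \cong R^*$. Since $|\T| > 1$, the module $\Delta_R(\T)$ is free of rank at least one, with a basis element $x_1 - x_0$ for distinct $x_0, x_1 \in \T$; and since $|R^*| > 1$, I may fix $\lambda \in R^*$ with $\lambda \neq 1$.

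First I would record the multiplication in $R^{\circ}[\T]$: because $\T$ is trivial, $p.q = \varepsilon(q)\,p$ for all $p, q \in R[\T]$, while $e$ is the unity. Using this I would compute the conjugation action of $\mathcal{V}_2(R^{\circ}[\T])$ on $\mathcal{V}_1(R^{\circ}[\T])$ directly: for $a \in \Delta_R(\T)$ and $w = e + (\lambda-1)x_0$, a short calculation (in which the $x_0$-terms cancel precisely because $\varepsilon(a) = 0$) gives $w^{-1}.(e+a).w = e + \lambda a$. Thus, under the identification $\mathcal{V}_1(R^{\circ}[\T]) \cong \Delta_R(\T)$, the generator of $\mathcal{V}_2(R^{\circ}[\T])$ attached to $\lambda$ acts as multiplication by the unit $\lambda$.

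With the action in hand, the heart of the argument is the lower central series $\gamma_n = \gamma_n\big(\mathcal{V}(R^{\circ}[\T])\big)$. For $v = e + a \in \mathcal{V}_1(R^{\circ}[\T])$ one computes $[v, w] = v^{-1}.(w^{-1}.v.w) = (e-a).(e+\lambda a) = e + (\lambda - 1)a$, again using $\varepsilon(a)=0$. I would then prove by induction that
$$
\gamma_{n+1} \supseteq \big\{\, e + (\lambda-1)^{n} a ~|~ a \in \Delta_R(\T) \,\big\},
$$
the inductive step being the very same commutator identity applied to $e + (\lambda-1)^{n-1}a$ and $w$. In particular $e + (\lambda - 1)^{n}(x_1 - x_0) \in \gamma_{n+1}$ for every $n$.

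The main obstacle is to conclude that these elements are genuinely non-trivial, i.e.\ that $\gamma_{n+1} \neq \{e\}$ for all $n$. Since $x_1 - x_0$ belongs to a free $R$-basis of $\Delta_R(\T)$, the element $e + (\lambda-1)^{n}(x_1-x_0)$ is non-trivial exactly when $(\lambda-1)^{n} \neq 0$ in $R$. Hence the crux is to arrange that $\lambda - 1$ is \emph{not nilpotent} for a suitable choice of $\lambda \in R^*$ with $\lambda \neq 1$; granting such a choice, no term of the lower central series vanishes and $\mathcal{V}(R^{\circ}[\T])$ fails to be nilpotent. This is the delicate point where the hypotheses must be leveraged carefully: it is securing a non-nilpotent difference $\lambda - 1$, rather than merely $\lambda \neq 1$, that I expect to demand the most attention, and it is what separates the cases where the conclusion holds from degenerate ones.
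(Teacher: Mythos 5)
Your computations are correct, and your route is in substance identical to the paper's: the paper also reduces to the two-element trivial rack $\T_2=\{x,y\}$, forms the left-normed commutators $w_1=[v,u]$, $w_n=[w_{n-1},u]$, and arrives at $w_n=e+(\varepsilon(u)-\varepsilon(v))(\varepsilon(u)-1)^{n-1}(y-x)$, which with $\varepsilon(v)=1$ and $\varepsilon(u)=\lambda$ is exactly your $e+(\lambda-1)^{n}(y-x)$. The one step you leave open --- producing a unit $\lambda\neq 1$ with $\lambda-1$ \emph{not} nilpotent --- is a genuine gap in your proposal, but your hesitation there is exactly right: that step cannot be carried out from the stated hypotheses, because the proposition is false as stated. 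Take $R=\mathbb{Z}/4\mathbb{Z}$ and $\T=\T_2$. Then $R^*=\{1,3\}$, so $|R^*|>1$. Here $\mathcal{V}_1(R^{\circ}[\T_2])\cong\Delta_R(\T_2)\cong\mathbb{Z}_4$ is generated by $g=e+(y-x)$, the group $\mathcal{V}_2(R^{\circ}[\T_2])=\{e,\,e+2x\}$ has order $2$, and conjugation by $h=e+2x$ sends $g$ to $e+3(y-x)=g^{-1}$. Hence $\mathcal{V}(R^{\circ}[\T_2])\cong\mathbb{Z}_4\rtimes\mathbb{Z}_2$ is the dihedral group of order $8$, which is nilpotent of class $2$. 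Concretely, $\lambda-1=2$ is nilpotent for the only available $\lambda\neq 1$, and your element $e+(\lambda-1)^{n}(y-x)$ equals $e$ for all $n\geq 2$.

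The paper's own proof commits precisely the error you were guarding against: from $\varepsilon(u)\neq\varepsilon(v)$ and $\varepsilon(u)\neq 1$ it concludes that $(\varepsilon(u)-\varepsilon(v))(\varepsilon(u)-1)^{n-1}\neq 0$ for every $n$, which is unjustified in a ring with zero divisors and false for $\mathbb{Z}/4\mathbb{Z}$. Your argument, as written, does prove the proposition under the additional hypothesis that some $\lambda\in R^*$ satisfies that $\lambda-1$ is not nilpotent --- for instance whenever $R$ is a domain, a field with more than two elements, or a ring of characteristic $0$ (take $\lambda=-1$) --- and that is the correct general form of the statement. In summary: same method as the paper, correct computations, an honestly flagged gap at the final step, and that gap is unfillable because the claim itself requires a stronger hypothesis on $R$.
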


\begin{proof}
It is enough to prove that for the two element trivial rack $\T_2 = \{ x, y \}$ the group $\mathcal{V}(R^{\circ}[\T_2])$ is not nilpotent. Let
$$
v = e + (y-x) + (\varepsilon(v) - 1) x~\textrm{and}~u = e + (y-x) + (\varepsilon(u) - 1) x
$$
be two elements of $\mathcal{V}(R^{\circ}[\T_2])$ with   $\varepsilon(v) \not= \varepsilon(u)$ and $\varepsilon(u) \not= 1$. Then
$$
v^{-1} = e - \varepsilon(v)^{-1} (y-x) + (\varepsilon(v)^{-1} - 1) x,~~~u^{-1} = e - \varepsilon(u)^{-1} (y-x) + (\varepsilon(u)^{-1} - 1) x.
$$

Now consider the following sequence of elements
$$
w_1 = [v, u] ~\textrm{and}~w_n = [w_{n-1},  u]~\mbox{for}~n >1.
$$
We have
$$
w_1 = [v, u] = e + (\varepsilon(u) - \varepsilon(v)) (y-x).
$$
 Using induction on $n$, we get
$$
w_n = e + (\varepsilon(u) - \varepsilon(v)) (\varepsilon(u) - 1)^{n-1} (y-x).
$$
Since $\varepsilon(v) \not= \varepsilon(u)$ and $\varepsilon(u) \not= 1$, the elements $w_n$ are not trivial, and hence $\mathcal{V}(R^{\circ}[\T])$ is not nilpotent.
\end{proof}
\bigskip

An important ingredient in the study of  units of group rings are the central units.  We define the {\it center} $\Z(R[X])$ of the rack ring $R[X]$ as
$$
\Z(R[X]) := \big\{ u \in R[X] ~|~u.v = v.u~\mbox{for all}~v \in R[X] \big\}.
$$
Notice that, $\Z(R[X])$ is clearly an additive subgroup of the rack ring $R[X]$. Since $R[X]$, in general, is non-associative, it follows that $\Z(R[X])$ need not be a subring of  $R[X]$. Of course, if $X$ is a trivial rack, then $\Z(R[X])$ is a subring of $R[X]$. However, even in this case, $\Z(R[X])$ need not be an ideal of $R[X]$.

Recall that, a rack is called {\it latin} if the left multiplication by each element is a permutation of the rack. For example, odd order dihedral quandles are latin. Notice that, a latin rack is always connected.

\begin{proposition}
Let $X = \{ x_1, x_2, \ldots, x_n \}$ be a finite latin rack. Then the following hold:
\begin{enumerate}
\item The element $w = x_1 + x_2 + \cdots + x_n$ lies in the center $\Z(R[X])$.
\item If $R$ is a field with characteristic not dividing $n$, then $\frac{1}{n}w$ is idempotent.
\item If $R$ is a field with characteristic not dividing $n$, then any element of the form $e + \alpha w$ with $\alpha \neq -1/n$ is a central unit with inverse $e  -\frac{\alpha}{1+n \alpha} w$.
\end{enumerate}
\end{proposition}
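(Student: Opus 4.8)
The plan is to prove the three parts in sequence, using the latin property to understand how left and right multiplication by elements of $X$ permute the terms of $w$. The key observation is that, for a latin rack, left multiplication $L_a : x \mapsto a.x$ is a permutation of $X$ for each $a \in X$, and right multiplication $S_a = R_a : x \mapsto x.a$ is an automorphism (hence a permutation) of $X$ by the rack axioms. Both facts say that $a.w = \sum_i a.x_i$ and $w.a = \sum_i x_i.a$ are each a permutation of the sum $x_1 + \cdots + x_n$, so both equal $w$.

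For part (1), I would first establish that $a.w = w = w.a$ for every $a \in X$, using the permutation observations above. Then, for a general element $v = \sum_j \beta_j x_j \in R[X]$, I would compute
\begin{eqnarray*}
w.v &=& \sum_j \beta_j (w.x_j) = \sum_j \beta_j w = \Big(\sum_j \beta_j\Big) w,\\
v.w &=& \sum_j \beta_j (x_j.w) = \sum_j \beta_j w = \Big(\sum_j \beta_j\Big) w,
\end{eqnarray*}
so that $w.v = v.w$ for all $v$, giving $w \in \Z(R[X])$. The only subtlety is confirming $w.x_j = w$: this uses that $x \mapsto x.x_j$ is the permutation $S_{x_j}$, so $\sum_i x_i.x_j$ reorders the $x_i$ and returns $w$. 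Likewise $x_j.w = w$ uses that $x \mapsto x_j.x$ is a permutation by the latin hypothesis.

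For part (2), with $R$ a field of characteristic not dividing $n$, I would compute $w.w$. Since $w \in \Z(R[X])$ and $w.x_i = w$ for each $i$, we get $w.w = \sum_i w.x_i = \sum_i w = n\,w$. Therefore $\big(\tfrac{1}{n}w\big).\big(\tfrac{1}{n}w\big) = \tfrac{1}{n^2}(n w) = \tfrac{1}{n}w$, so $\tfrac{1}{n}w$ is idempotent; here the characteristic hypothesis is exactly what makes $\tfrac{1}{n}$ meaningful. For part (3), I would verify directly in $R^\circ[X]$ that $(e + \alpha w)\big(e - \tfrac{\alpha}{1+n\alpha}w\big) = e$. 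Expanding and using $w.w = n w$ gives $e + \big(\alpha - \tfrac{\alpha}{1+n\alpha} - \tfrac{n\alpha^2}{1+n\alpha}\big)w$, and the coefficient of $w$ simplifies to $\tfrac{\alpha(1+n\alpha) - \alpha - n\alpha^2}{1+n\alpha} = 0$; centrality of $w$ from part (1) makes the element central and shows the two-sided inverse coincides, while $\alpha \neq -1/n$ is precisely the condition that $1 + n\alpha$ is invertible.

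I expect the only real care-point, rather than a genuine obstacle, to be part (1): one must be explicit that the latin hypothesis supplies the permutation on the left ($x \mapsto a.x$) while the rack axioms supply it on the right ($x \mapsto x.a$), and that reindexing a finite sum by a permutation leaves it unchanged. Parts (2) and (3) are then routine algebraic manipulations resting on the single identity $w.w = nw$, which itself follows immediately once part (1) and $w.x_i = w$ are in hand.
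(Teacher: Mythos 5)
Your proposal is correct and follows essentially the same route as the paper: part (1) via $w.x_i = w$ (from $S_{x_i}$ being a permutation) and $x_i.w = w$ (from the latin hypothesis), part (2) via the identity $w.w = nw$, and part (3) by the same coefficient computation in $e + \alpha w$. The only cosmetic differences are that you make the linearity step in (1) and the verification in (3) explicit, whereas the paper solves for the coefficient $\beta$ of the inverse rather than checking the given formula.
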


\begin{proof}
Notice that, for each $x_i$, we have
$$
w . x_i = S_{x_i}(x_1)+S_{x_i}(x_2)+ \cdots + S_{x_i}(x_n) = w,
$$
since $S_{x_i}$ is an automorphism of $X$ and acts as a permutation. On the other hand, since the rack $X$ is latin, we have $x_i . w = w$ proving (1).

If the characteristic of $R$ does not divide $n$, then a direct computation yields (2).

For (3), suppose that $e + \alpha w$ has inverse $e +  \beta w$. Then, using (2), we have
$$
(e + \alpha w). (e + \beta w) = e + (\alpha + \beta + n \alpha \beta) w=e.
$$
Consequently, $\alpha + \beta + n\alpha \beta = 0$, and hence $\beta = -\alpha/(1+n \alpha)$. By (1), $e + \alpha w$ is clearly central.
\end{proof}
\bigskip

\section{Associated graded ring of a rack ring}\label{section6}
Let $X$ be a rack and $R$ an associative ring. Consider the direct sum
$$
\mathcal{X}_R(X) := \sum_{i\geq 0} \Delta^i_R(X) / \Delta^{i+1}_R(X)
$$
of $R$-modules $\Delta^i_R(X) / \Delta^{i+1}_R(X)$. We regard $\mathcal{X}_R(X)$ as a graded $R$-module with the convention that the elements of $\Delta^i_R(X) / \Delta^{i+1}_R(X)$ are homogeneous of degree $i$. Define multiplication in $\mathcal{X}_R(X)$ as follows. For $u_i \in \Delta^i_R(X)$, let $\underline{u}_i = u_i + \Delta^{i+1}_R(X)$. Then, for $\underline{u}_i \in \Delta^i_R(X) / \Delta^{i+1}_R(X)$, $\underline{u}_j \in \Delta^j_R(X) / \Delta^{j+1}_R(X)$, we define $\underline{u}_i \cdot \underline{u}_j = \underline{u_i u_j}$. The product of two arbitrary elements of $\mathcal{X}_R(X)$ is defined by extending the above product by linearity, that is, if $\underline{u} = \sum \underline{u}_i$ and $\underline{v} = \sum \underline{v}_j$ are decompositions of $\underline{u}, \underline{v} \in \mathcal{X}_R(X)$ into homogeneous components, then
$$
\underline{u} \cdot \underline{v} = \sum_{i,j} \underline{u}_i \cdot \underline{v}_j.
$$
With this multiplication, $\mathcal{X}_R(X)$ becomes a graded ring, and we call it the {\it associated graded ring of} $R[X]$.

For trivial quandles we have a full description of its associated graded ring.

\begin{proposition}
If $T$ is a trivial quandle and $x_0 \in T$, then $\mathcal{X}_R(\T) = R x_0\oplus \Delta_R(\T)$.
\end{proposition}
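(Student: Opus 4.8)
The plan is to exploit the fact that the augmentation ideal of a trivial quandle squares to zero, which collapses the infinite direct sum defining $\mathcal{X}_R(\T)$ to only two homogeneous pieces.

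First I would apply Theorem~\ref{deltasqzero}: since $\T$ is trivial, $\Delta_R^2(\T)=\{0\}$, and hence $\Delta_R^i(\T)=\{0\}$ for every $i\ge 2$. Consequently each summand $\Delta_R^i(\T)/\Delta_R^{i+1}(\T)$ with $i\ge 2$ is zero, so the only surviving homogeneous components lie in degrees $0$ and $1$, namely $\Delta_R^0(\T)/\Delta_R^1(\T)=R[\T]/\Delta_R(\T)$ (using the convention $\Delta_R^0(\T)=R[\T]$) and $\Delta_R^1(\T)/\Delta_R^2(\T)=\Delta_R(\T)/\{0\}=\Delta_R(\T)$.

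Next I would identify the degree-$0$ component with $Rx_0$. Writing an arbitrary element of $R[\T]$ as
$$\sum_i\alpha_i x_i=\Big(\sum_i\alpha_i\Big)x_0+\sum_i\alpha_i(x_i-x_0)$$
exhibits the internal $R$-module decomposition $R[\T]=Rx_0\oplus\Delta_R(\T)$, the second summand being precisely the augmentation ideal. Therefore the composite $Rx_0\hookrightarrow R[\T]\twoheadrightarrow R[\T]/\Delta_R(\T)$ is an isomorphism of $R$-modules, and the degree-$0$ piece is thereby identified with $Rx_0$. Assembling the two surviving components yields $\mathcal{X}_R(\T)=Rx_0\oplus\Delta_R(\T)$.

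To record the graded ring structure I would use the rule $\underline{u}_i\cdot\underline{u}_j=\underline{u_iu_j}$ together with the triviality of $\T$. On the degree-$0$ part the relation $x_0.x_0=x_0$ shows that $Rx_0$ is a subring with $x_0$ acting as its unit; for $a\in\Delta_R(\T)$ a direct computation gives $x_0.a=0$ and $a.x_0=a$; and any product of two degree-$1$ elements lands in $\Delta_R^2(\T)=\{0\}$. I do not anticipate a genuine obstacle here: once $\Delta_R^2(\T)=\{0\}$ is in hand the argument is essentially bookkeeping, the only point requiring a little care being the correct identification of the degree-$0$ quotient $R[\T]/\Delta_R(\T)$ with the concrete submodule $Rx_0$ via the splitting of $R[\T]$.
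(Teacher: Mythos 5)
Your proof is correct and follows exactly the route the paper intends: the paper's own proof is a one-line citation of the definition of $\mathcal{X}_R(\T)$ together with Theorem \ref{deltasqzero}, and your argument is simply the worked-out version of that, with the vanishing of $\Delta_R^i(\T)$ for $i\ge 2$ collapsing the sum to degrees $0$ and $1$ and the splitting $R[\T]=Rx_0\oplus\Delta_R(\T)$ identifying the degree-$0$ piece.
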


\begin{proof}
It follows from the definition of $\mathcal{X}_R(\T)$ and Theorem \ref{deltasqzero}.
\end{proof}

For studying $\mathcal{X}_R(X)$ we need to understand the quotients $\Delta^i_R(X) / \Delta^{i+1}_R(X)$. In the case of groups, we
have the following result (see \cite[p. 122]{Passi}): Let $G$ be a finite group and $Q_n(G) = \Delta^n_{\mathbb{Z}} (G) / \Delta^{n+1}_{\mathbb{Z}} (G)$, then there exist integers $n_0$ and $\pi$ such that
$$
Q_n(G) \cong Q_{n+\pi}(G) ~\mbox{for all}~n \geq n_0.
$$

In what follows,  we compute  powers of the integral augmentation ideals of the dihedral quandles $\R_n$ for some small values of $n$. Observe that $\R_2 = \T_2$, the trivial quandle with 2 elements. Consider the integral quandle ring of the dihedral quandle $\R_3= \{a_0, a_1, a_2 \}$. Set $e_1:= a_1-a_0$ and $e_2:=a_2-a_0$.
Then  $\Delta(\R_3)= \langle e_1, e_2 \rangle $. To determine  $\Delta^2(\R_3)$, we compute the products $e_i e_j$ and put them in the following table.

\begin{center}
\begin{tabular}{|c||c|c|}
  \hline
 $\cdot$ & $e_1$ & $e_2$  \\
   \hline
     \hline
$e_1$ & $e_1 - 2 e_2$ & $-e_1 - e_2$  \\
  \hline
$e_2$  & $-e_1 - e_2$ & $-2 e_1 + e_2$ \\
  \hline
\end{tabular}\\
\end{center}

It follows from the table that
$\Delta^2(\R_3) = \langle e_1+e_2, 3e_2 \rangle $ and $\Delta(\R_3) / \Delta^2(\R_3) \cong \mathbb{Z}_3$. Using induction we prove the following result.

\begin{proposition}\label{ass-graded-1}
The following holds for each natural number $k$:
$$
\Delta^{2k-1}(\R_3) = \langle 3^{k-1} e_1, 3^{k-1} e_2 \rangle,~~~ \Delta^{2k}(\R_3) = \langle 3^{k-1} (e_1 + e_2), 3^{k} e_2 \rangle,~\textrm{and}~ \Delta^{k}(\R_3) / \Delta^{k+1}(\R_3) \cong \mathbb{Z}_3.$$
\end{proposition}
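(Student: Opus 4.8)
The plan is to first recast the three asserted formulas in a form that exposes their common mechanism. Writing $\Delta := \Delta(\R_3) = \langle e_1, e_2 \rangle$ and recalling from the table that $\Delta^2 = \langle e_1 + e_2,\, 3 e_2 \rangle$, the claimed identities
$$\Delta^{2k-1} = \langle 3^{k-1} e_1,\, 3^{k-1} e_2 \rangle \quad\text{and}\quad \Delta^{2k} = \langle 3^{k-1}(e_1 + e_2),\, 3^{k} e_2 \rangle$$
say exactly that $\Delta^{2k-1} = 3^{k-1}\Delta$ and $\Delta^{2k} = 3^{k-1}\Delta^2$; equivalently, that $\Delta^{n+2} = 3\Delta^n$ for every $n \ge 1$. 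I would prove this compact form by induction and then read off the three stated consequences. Note that, although $R[\R_3]$ itself is non-commutative, the table is symmetric ($e_1 e_2 = e_2 e_1$), so the ideal $\Delta$ is a commutative (non-associative) ring; this makes left and right products agree, i.e. $\Delta^i \Delta^j = \Delta^j \Delta^i$, which I will use freely.

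The engine of the induction is a single product computation. Using the multiplication table I would establish
$$\Delta \cdot \Delta^2 = 3\Delta,$$
by expanding the four generating products $e_1(e_1 + e_2) = -3 e_2$, $e_1(3 e_2) = -3 e_1 - 3 e_2$, $e_2(e_1 + e_2) = -3 e_1$ and $e_2(3 e_2) = -6 e_1 + 3 e_2$, and observing that their $\mathbb{Z}$-span is precisely $\langle 3 e_1,\, 3 e_2 \rangle = 3\Delta$. Combined with $\Delta \cdot \Delta = \Delta^2$ (the definition of $\Delta^2$) and the scaling rules $\Delta \cdot (3^c \Delta) = 3^c \Delta^2$ and $\Delta \cdot (3^c \Delta^2) = 3^{c+1}\Delta$, the induction then runs cleanly: from $\Delta^{2k-1} = 3^{k-1}\Delta$ one gets $\Delta^{2k} = \Delta \cdot \Delta^{2k-1} = 3^{k-1}\Delta^2$, and from $\Delta^{2k} = 3^{k-1}\Delta^2$ one gets $\Delta^{2k+1} = \Delta \cdot \Delta^{2k} = 3^{k}\Delta$, starting from the base cases $\Delta^1 = \Delta$ and $\Delta^2 = \langle e_1 + e_2,\, 3 e_2 \rangle$.

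The point needing care — and what I expect to be the main obstacle — is that $R[\R_3]$ is non-associative, so $\Delta^n$ must be read as $\sum_{i+j=n}\Delta^i\Delta^j$, and one must check that the cross terms add nothing beyond $\Delta \cdot \Delta^{n-1}$. I would handle this by bookkeeping parity and $3$-adic scale: by the inductive hypothesis each factor $\Delta^i$ equals $3^{\lfloor (i-1)/2 \rfloor}$ times $\Delta$ (if $i$ is odd) or $\Delta^2$ (if $i$ is even), and there are only three product patterns, $\Delta \cdot \Delta = \Delta^2$, $\Delta \cdot \Delta^2 = 3\Delta$ and $\Delta^2 \cdot \Delta^2 = 3\Delta^2$. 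A short count of the exponents then shows that \emph{every} summand $\Delta^i \Delta^{n-i}$ equals $3^{k-1}\Delta^2$ when $n = 2k$ and $3^{k-1}\Delta$ when $n = 2k-1$, so the sum collapses to a single module and no enlargement occurs. This uses the additional identity $\Delta^2 \cdot \Delta^2 = 3\Delta^2$, which I would verify by the same kind of direct expansion as above.

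Finally, the isomorphisms $\Delta^k/\Delta^{k+1} \cong \mathbb{Z}_3$ fall out of the explicit descriptions. Since $\Delta \cong \mathbb{Z}^2$ is torsion-free, multiplication by $3^{j-1}$ is injective and identifies $\Delta^{2j-1}/\Delta^{2j} = 3^{j-1}\Delta / 3^{j-1}\Delta^2$ with $\Delta/\Delta^2 \cong \mathbb{Z}_3$, and identifies $\Delta^{2j}/\Delta^{2j+1} = 3^{j-1}\Delta^2 / 3^{j}\Delta$ with $\Delta^2/3\Delta$. It then remains to see that $\Delta^2/3\Delta \cong \mathbb{Z}_3$: the inclusion $3\Delta \subseteq \Delta^2$ holds since $3 e_1 = 3(e_1 + e_2) - 3 e_2 \in \Delta^2$, and from $[\Delta : \Delta^2] = 3$ and $[\Delta : 3\Delta] = 9$ one gets $[\Delta^2 : 3\Delta] = 3$, so the quotient is cyclic of order $3$. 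This finishes the computation of all the associated graded pieces.
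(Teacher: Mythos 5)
Your proof is correct and follows the route the paper merely gestures at: induction from the multiplication table, starting from $\Delta(\R_3)=\langle e_1,e_2\rangle$ and $\Delta^2(\R_3)=\langle e_1+e_2,\,3e_2\rangle$. The paper gives no details beyond ``using induction,'' and your reformulation $\Delta^{n+2}=3\Delta^n$, the verified identities $\Delta\cdot\Delta^2=3\Delta$ and $\Delta^2\cdot\Delta^2=3\Delta^2$, and in particular the check that in the non-associative setting every cross term $\Delta^i\Delta^{n-i}$ collapses to the same module, supply exactly the bookkeeping the authors omit; all of your computations agree with the table.
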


From the preceding proposition, we obtain the infinite filtration
$$
\Delta(\R_3) \supseteq \Delta^2(\R_3) \supseteq \Delta^3(\R_3) \supseteq \ldots
$$
such that
$\cap_{n \ge 0} \Delta^n(\R_3) = \{ 0 \}$. Hence, $\Delta^n(\R_3)$ in not nilpotent, but is residually nilpotent.
\bigskip

Next, we calculate the powers of augmentation ideal of $\R_4$. First, notice that
$$
\mathbb{Z}[\R_4] = \mathbb{Z} a_0 \oplus \mathbb{Z} a_1 \oplus \mathbb{Z} a_2 \oplus \mathbb{Z} a_3
$$
and
$$
\Delta(\R_4) = \mathbb{Z} (a_1 - a_0) \oplus \mathbb{Z} (a_2 - a_0) \oplus \mathbb{Z} (a_3 - a_0).
$$

Let us set
$$
e_1: = a_1 - a_0,  ~~e_2 := a_2 - a_0, ~~e_3 := a_3 - a_0.
$$

\begin{proposition}\label{ass-graded-2}
\begin{enumerate}
\item $\Delta^2(\R_4)$ is generated as an abelian group by the set $\{e_1 - e_2 - e_3, 2 e_2\}$
and $\Delta(\R_4) / \Delta^2(\R_4) \cong \mathbb{Z} \oplus \mathbb{Z}_2$.
\item If $k > 2$, then $\Delta^k(\R_4)$ is generated as an abelian group by the set $\{2^{k-1}(e_1 - e_2 - e_3), 2^k e_2\}$
and $\Delta^{k-1}(\R_4) / \Delta^k(\R_4) \cong \mathbb{Z}_2 \oplus \mathbb{Z}_2$.
\end{enumerate}
\end{proposition}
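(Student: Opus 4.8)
The plan is to reduce the whole proposition to one $3\times 3$ multiplication table together with a doubling induction. I fix the basis $e_1=a_1-a_0$, $e_2=a_2-a_0$, $e_3=a_3-a_0$ of $\Delta(\R_4)$ and compute all nine products $e_i.e_j$ in $\mathbb{Z}[\R_4]$ from the dihedral law $a_i.a_j=a_{2j-i}$. Writing $w_1=e_1-e_2-e_3$ and $w_2=2e_2$, the table shows every $e_i.e_j$ equals one of $w_1$, $-w_2$, $-w_1-w_2$, or $0$, and that $w_1,w_2$ are $\mathbb{Z}$-linearly independent; hence $\Delta^2(\R_4)=\langle w_1,w_2\rangle$ is free of rank $2$, which is the generating statement in (1). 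For the quotient I perform the unimodular change of basis $\{e_1,e_2,e_3\}\mapsto\{w_1,e_2,e_3\}$ of $\Delta(\R_4)$; in the new basis $\Delta^2(\R_4)$ is generated by $w_1$ and $2e_2$, so $\Delta(\R_4)/\Delta^2(\R_4)\cong \mathbb{Z}e_3\oplus(\mathbb{Z}e_2/2\mathbb{Z}e_2)\cong\mathbb{Z}\oplus\mathbb{Z}_2$, finishing (1).

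For (2) the decisive observation, which I isolate first, is that left multiplication by a generator kills $\Delta^2$. Indeed the table gives $e_i.e_1=e_i.e_3$ and $e_i.e_2=0$, so $e_i.w_1=e_i.e_1-e_i.e_2-e_i.e_3=0$ and $e_i.w_2=0$, whence $\Delta(\R_4).\Delta^2(\R_4)=0$. A second direct check gives $w_a.w_b=0$, i.e. $\Delta^2(\R_4).\Delta^2(\R_4)=0$. These two vanishing identities defuse the non-associativity: in $\Delta^n=\sum_{i+j=n}\Delta^i\Delta^j$ every summand with left factor in $\Delta$ (using $\Delta^{n-1}\subseteq\Delta^2$) or with both factors of degree at least $2$ vanishes, leaving the single surviving term $\Delta^n(\R_4)=\Delta^{n-1}(\R_4).\Delta(\R_4)$ for all $n\geq 3$.

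It remains to compute right multiplication by $\Delta$ on $\Delta^2$. The table yields $w_1.e_i\in\{2w_1+2w_2,\,0\}$ and $w_2.e_i\in\{-2w_2,\,0\}$, so $\Delta^2(\R_4).\Delta(\R_4)=\langle 2w_1,2w_2\rangle=2\,\Delta^2(\R_4)$; that is, right multiplication by $\Delta$ exactly doubles $\Delta^2$. Pulling scalars through the bilinear product, an induction on $n$ then gives $\Delta^n(\R_4)=2^{n-2}\,\Delta^2(\R_4)=\langle 2^{n-2}(e_1-e_2-e_3),\,2^{n-1}e_2\rangle$ for every $n\geq 2$, which is the asserted generating set. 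The consecutive quotient is immediate: $\Delta^{n}(\R_4)/\Delta^{n+1}(\R_4)\cong\Delta^2(\R_4)/2\,\Delta^2(\R_4)\cong\mathbb{Z}_2\oplus\mathbb{Z}_2$ since $\Delta^2(\R_4)\cong\mathbb{Z}^2$.

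The step I expect to need the most care is precisely the collapse of the non-associative power $\Delta^n$: a priori it is the span of all $n$-fold products under all bracketings, and the clean recursion $\Delta^n=\Delta^{n-1}.\Delta$ is not automatic. Everything hinges on the two identities $\Delta.\Delta^2=0$ and $\Delta^2.\Delta^2=0$, which kill all but the right-normed products; once these are verified and seen to propagate to higher powers through $\Delta^n\subseteq\Delta^2$, the remainder is a routine doubling induction.
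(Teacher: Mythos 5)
Your computations are correct and your overall route (the $3\times 3$ table for the $e_i.e_j$, then induction) is essentially the paper's; your added care with non-associativity --- isolating $\Delta.\Delta^2=0$ and $\Delta^2.\Delta^2=0$ to collapse $\Delta^n=\sum_{i+j=n}\Delta^i\Delta^j$ down to $\Delta^{n-1}.\Delta$, and then the doubling identity $\Delta^2.\Delta=2\Delta^2$ --- is a genuine improvement, since the paper only computes $\Delta^3$ by multiplying the basis of $\Delta^2$ on both sides and then appeals to an unexplained induction. One point needs fixing, though: you assert that your formula $\Delta^k(\R_4)=\langle 2^{k-2}(e_1-e_2-e_3),\,2^{k-1}e_2\rangle$ ``is the asserted generating set,'' but the proposition as stated claims $\langle 2^{k-1}(e_1-e_2-e_3),\,2^{k}e_2\rangle$ for $k>2$. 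These differ by a factor of $2$. Your version is the correct one: it agrees with the paper's own computation $\Delta^3=\langle 2(e_1-e_2-e_3),4e_2\rangle$, and it is the only version compatible with the second half of (2), since $\Delta^{k-1}/\Delta^k\cong\mathbb{Z}_2\oplus\mathbb{Z}_2$ forces $\Delta^k=2\Delta^{k-1}$ starting from $\Delta^2=\langle e_1-e_2-e_3,2e_2\rangle$, whereas the printed exponents would give $\Delta^3=4\Delta^2$ and hence $\Delta^2/\Delta^3\cong\mathbb{Z}_4\oplus\mathbb{Z}_4$. So state explicitly that you are proving the (corrected) formula with exponents $2^{k-2}$ and $2^{k-1}$ rather than claiming literal agreement with the statement.
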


\begin{proof}
First we consider (1). The abelian group $\Delta^2(\R_4)$ is generated by the products $e_i .e_j$. We write these product in the following table.

\begin{center}
\begin{tabular}{|c||c|c|c|}
  \hline
 $\cdot$ & $e_1$ & $e_2$ & $e_3$ \\
   \hline
     \hline
$e_1$ & $e_1 - e_2 -e_3$ & $0$ & $e_1 - e_2 -e_3$ \\
  \hline
$e_2$  & $-2 e_2$ & $0$ & $-2 e_2$ \\
  \hline
$e_3$ & $-e_1 - e_2+e_3$ & $0$ & $-e_1 - e_2+e_3$ \\
  \hline
\end{tabular}\\
\end{center}
Hence, $\Delta^2(\R_4)$ is generated by the set
$$
\{e_1 - e_2 -e_3,~~ -2 e_2,~~ -e_1 - e_2+e_3\}
$$
and $\{e_1 - e_2 - e_3, 2 e_2\}$ form a basis of $\Delta^2(\R_4)$. The fact that $\Delta^2(\R_4) / \Delta^3(\R_4)\cong \mathbb{Z}_2 \oplus \mathbb{Z}_2$ follows from the properties of abelian groups.

Next, we consider (2). To find a basis of $\Delta^3(\R_4)$, multiply the basis of $\Delta^2(\R_4)$ by the elements $e_1, e_2, e_3$. Using the multiplication table, we get
$$
(e_1 - e_2 - e_3) .e_1 = 2 (e_1 + e_2 - e_3),~~e_1. (e_1 - e_2 - e_3) = 0,~~~(e_1 - e_2 - e_3) .e_2 = e_2 .(e_1 - e_2 - e_3) = 0,
$$
$$
(e_1 - e_2 - e_3) .e_3 = 2 (e_1 + e_2 - e_3),~~e_3 .(e_1 - e_2 - e_3) = 0,~~~2 e_1 . e_2 = 0,~~~2 e_2 .e_1 = - 4 e_2,
$$
$$
2 e_2 .e_2 = 0,~~~2 e_2 .e_3 = - 4 e_2,~~~2 e_3. e_2 = 0.
$$
Hence, the set $\{2 (e_1 - e_2 - e_3), 4 e_2\}$ forms a basis of $\Delta^3(\R_4)$. We obtain the general formulas using induction on $k$.
\end{proof}

Finally, we consider  the quandle $\R_5$. Recall that
$$
\mathbb{Z}[\R_5] = \mathbb{Z} a_0 \oplus \mathbb{Z} a_1 \oplus \mathbb{Z} a_2 \oplus \mathbb{Z} a_3 \oplus \mathbb{Z} a_4,
$$
and
$$
\Delta(\R_4) = \mathbb{Z} (a_1 - a_0) \oplus \mathbb{Z} (a_2 - a_0) \oplus \mathbb{Z} (a_3 - a_0) \oplus \mathbb{Z} (a_4 - a_0).
$$
Denote
$$
e_i := a_i - a_0~\textrm{for}~  i=1, 2, 3, 4.
$$

\begin{proposition}\label{ass-graded-3}
 $\Delta^2(\R_5)$ is generated as an abelian group by $  \{e_1 - e_2 - e_4, e_2 + 2 e_4, e_3 + 3 e_4, 5 e_4 \}$ and $\Delta(\R_5) / \Delta^2(\R_5) \cong \mathbb{Z}_5$.
\end{proposition}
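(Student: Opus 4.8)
The plan is to follow exactly the pattern used for $\R_3$ and $\R_4$: generate $\Delta^2(\R_5)$ by the sixteen products $e_i . e_j$ with $1 \le i, j \le 4$, write these out, and then reduce the resulting generating set to the asserted basis by integral row operations. First I would record a closed formula for the products. Since $\R_5$ is the Takasaki quandle of $\mathbb{Z}/5\mathbb{Z}$, the operation is $a_i . a_j = a_{2j-i}$ with indices read modulo $5$. Writing $a_m = a_0 + e_m$ (with the convention $e_0 = 0$) and expanding $e_i . e_j = (a_i - a_0).(a_j - a_0)$ by bilinearity gives
$$e_i . e_j = e_{2j-i} - e_{2j} - e_{5-i},$$
all indices modulo $5$. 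This one identity produces the whole multiplication table in a single stroke; for instance $e_1 . e_1 = e_1 - e_2 - e_4$ and $e_2 . e_4 = e_1 - 2 e_3$.

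Next, using the earlier description $\Delta(\R_5) = \mathbb{Z} e_1 \oplus \mathbb{Z} e_2 \oplus \mathbb{Z} e_3 \oplus \mathbb{Z} e_4 \cong \mathbb{Z}^4$, the subgroup $\Delta^2(\R_5)$ is generated by these sixteen vectors. The core of the argument is then a purely integral linear-algebra reduction: I would assemble the sixteen generators (after discarding the obvious repetitions visible in the table) as an integer matrix and bring it to an upper-triangular Hermite-type form whose rows are precisely $e_1 - e_2 - e_4$, $e_2 + 2 e_4$, $e_3 + 3 e_4$, and $5 e_4$. Concretely, I would prove the two inclusions separately: every listed generator is a $\mathbb{Z}$-combination of the products, and conversely every product is a $\mathbb{Z}$-combination of the four listed generators.

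Finally, the transition matrix from $(e_1, e_2, e_3, e_4)$ to the four proposed basis vectors of $\Delta^2(\R_5)$ is upper triangular with diagonal $(1,1,1,5)$, so its determinant is $5$ and hence $[\Delta(\R_5) : \Delta^2(\R_5)] = 5$. Since every group of order $5$ is cyclic, this yields $\Delta(\R_5)/\Delta^2(\R_5) \cong \mathbb{Z}_5$, completing the proof.

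The step I expect to be the main obstacle is the integral reduction in the second paragraph. Because one works over $\mathbb{Z}$ rather than a field, the elimination must proceed without division, and one must be careful that no $\mathbb{Z}$-combination of the products produces a vector strictly finer than the claimed basis, which would drop the index below $5$. In particular, confirming that $5 e_4$ (and not some proper divisor multiple of $e_4$) is the smallest multiple of $e_4$ lying in $\Delta^2(\R_5)$ is where the genuine content sits; the remaining manipulations are routine bookkeeping once the table is in hand.
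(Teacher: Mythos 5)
Your proposal is correct and follows essentially the same route as the paper: the paper likewise tabulates the sixteen products $e_i.e_j$ (your closed formula $e_i.e_j=e_{2j-i}-e_{2j}-e_{5-i}$, indices mod $5$ with $e_0=0$, reproduces that table exactly) and then reduces the resulting generators to the stated triangular basis, with the index $5$ and hence $\mathbb{Z}_5$ read off from the diagonal. The integral reduction you flag as the main obstacle does go through — for instance $e_2+2e_4=-(e_1.e_1+e_1.e_3)$ and $5e_4=e_2+2e_4-e_1.e_2-e_1.e_4$, while every product satisfies $\alpha_1+2\alpha_2+3\alpha_3+4\alpha_4\equiv 0\pmod 5$, which shows the index cannot drop below $5$.
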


\begin{proof}
The abelian group $\Delta^2(\R_5)$ is generated by the products $e_i .e_j$. We write these products in the following table.

\begin{center}
\begin{tabular}{|c||c|c|c|c|}
  \hline
 $\cdot$ & $e_1$ & $e_2$ & $e_3$ & $e_4$ \\
   \hline
     \hline
$e_1$ & $e_1 - e_2 -e_4$ & $e_3 - 2 e_4$ & $-e_1 - e_4$ & $e_2 - e_3 - e_4$ \\
  \hline
$e_2$  & $-e_2 - e_3$ & $e_2 - e_3 - e_4$ & $- e_1 - e_3 + e_4$ & $e_1 - 2 e_3$ \\
  \hline
$e_3$ & $-2 e_2+e_4$ & $e_1 - e_2 - e_4$ & $-e_1 - e_2+e_3$ & $-e_2 - e_3$ \\
  \hline
$e_4$ & $-e_1 - e_2+e_3$ & $-e_1 - e_4$ & $-2e_1 + e_2$ & $-e_1 - e_3 + e_4$ \\
  \hline
\end{tabular}\\
\end{center}
It follows that $\Delta^2(\R_5)$ is generated by the elements from the above table. Further, it is not difficult to see that the elements
 $$
 \{e_1 - e_2 - e_4, e_2 + 2 e_4, e_3 + 3 e_4, 5 e_4 \}
 $$
 form a basis of $\Delta^2(\R_5)$. The fact that $\Delta(\R_5) / \Delta^2(\R_5)\cong \mathbb{Z}_5$ follows from the properties of abelian groups.
\end{proof}

The preceding results lead us to formulate the following.

\begin{conjecture}
Let $\R_n$ be the dihedral quandle and $R$ a ring.
\begin{enumerate}
\item If $n>1$ is an odd integer, then $\Delta^k(\R_n) / \Delta^{k+1}(\R_n) \cong \mathbb{Z}_n$ for all $k \geq 1$.
\item If $n>2$ is an even integer, then $|\Delta^k(\R_n) / \Delta^{k+1}(\R_n)| = n$ for all $k \geq 2$.
\end{enumerate}
\end{conjecture}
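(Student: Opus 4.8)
The plan is to reduce everything to the explicit multiplication rule on the augmentation ideal, and then to combine a uniform lower bound (valid for all $n$) with an inductive description of the higher powers. First I would record the basic product formula: writing $e_i = a_i - a_0$ and reading indices modulo $n$ with the convention $e_0 = 0$, a direct computation from $a_i.a_j = a_{2j-i}$ gives
$$
e_i . e_j = e_{2j-i} - e_{2j} - e_{n-i}
$$
for all $i,j$ (one checks this reproduces the tables for $\R_3$, $\R_4$, $\R_5$). Since $\Delta^2(\R_n)$ is generated as an abelian group by these products, this single identity controls the whole filtration. For the lower bound, which holds for every $n$, I would define $\phi : \Delta(\R_n) \to \mathbb{Z}_n$ on the free basis by $\phi(e_i) = i \bmod n$. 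Applying $\phi$ to the formula gives $\phi(e_i.e_j) = (2j-i) - 2j - (n-i) \equiv 0 \pmod{n}$, so $\phi$ kills $\Delta^2(\R_n)$ and descends to a surjection $\Delta/\Delta^2 \twoheadrightarrow \mathbb{Z}_n$. Hence $|\Delta(\R_n)/\Delta^2(\R_n)| \geq n$ unconditionally.

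For odd $n$ the base case $k=1$ of part (1) then finishes cleanly. Reading the relations $e_i.e_j \equiv 0 \pmod{\Delta^2}$ through the formula, the images $f_k := \overline{e_k}$ in $\Delta/\Delta^2$ satisfy $f_{2j-i} = f_{2j} + f_{-i}$. Because $n$ is odd, $2$ is invertible modulo $n$, so $2j$ runs over all of $\mathbb{Z}_n$ and these relations are equivalent to additivity $f_{a+b} = f_a + f_b$. Thus $\Delta/\Delta^2$ is cyclic, generated by $f_1$ with $nf_1 = f_0 = 0$, giving $|\Delta/\Delta^2| \leq n$; together with the surjection above this yields $\Delta/\Delta^2 \cong \mathbb{Z}_n$.

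To reach all $k$ I would aim to prove a scaling relation $\Delta^{k+(n-1)}(\R_n) = n\,\Delta^k(\R_n)$, exactly as in the case $n=3$ where one has $\Delta^{k+2} = 3\,\Delta^k$. Granting that each $\Delta^k$ has full rank $n-1$, such a relation forces $[\Delta^k : \Delta^{k+(n-1)}] = n^{\,n-1}$; if moreover a similar additivity collapse can be run one layer up to show each $\Delta^k/\Delta^{k+1}$ is cyclic, then the product of the $n-1$ factor orders equalling $n^{\,n-1}$ pins each factor to order exactly $n$ (and fixes the period to be $n-1$), proving part (1). The concrete mechanism would be an induction producing a basis of $\Delta^k$ from that of $\Delta^{k-1}$ by multiplying through the formula and tracking elementary divisors, as was carried out by hand for $\R_3$, $\R_4$ and $\R_5$.

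The main obstacle, and the reason this remains conjectural, is precisely this inductive step for general $n$. Controlling the elementary divisors of $\Delta^k$ uniformly in $k$ requires understanding how the number-theoretic factors (powers of $n$, and the gcd's with $n$ arising from the terms $e_{2j}$ and $e_{n-i}$) propagate under repeated multiplication in a genuinely non-associative ring, and no clean closed form for the basis is apparent. The even case (part (2)) is harder still: the homomorphism $\phi$ still gives order $\geq n$, but for even $n$ the invertibility of $2$ fails, the additivity collapse does not occur, and $\Delta/\Delta^2$ acquires a free summand (indeed $\mathbb{Z} \oplus \mathbb{Z}_2$ for $\R_4$). One would therefore have to show separately that this free rank is absorbed after a single step, so that finiteness of order $n$ only sets in from $k \geq 2$; establishing this stabilization is where I expect the real work to lie.
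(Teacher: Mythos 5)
The statement you were asked to prove is stated in the paper as a \emph{conjecture}: the authors give no proof, only the supporting computations for $\R_3$, $\R_4$ and $\R_5$ in Propositions \ref{ass-graded-1}, \ref{ass-graded-2} and \ref{ass-graded-3}. So there is no proof in the paper to compare yours against, and your attempt must be judged on its own terms. On those terms, the parts you actually carry out are correct and go beyond what the paper establishes: the product formula $e_i.e_j = e_{2j-i} - e_{2j} - e_{n-i}$ is right (it does reproduce the three tables), the homomorphism $\phi(e_i) = i \bmod n$ does annihilate $\Delta^2(\R_n)$ and gives the uniform lower bound $|\Delta(\R_n)/\Delta^2(\R_n)| \ge n$, and for odd $n$ the additivity collapse $f_{a+b} = f_a + f_b$ (using invertibility of $2$ modulo $n$) correctly pins down $\Delta(\R_n)/\Delta^2(\R_n) \cong \mathbb{Z}_n$. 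That is a genuine proof of part (1) in the case $k=1$ for all odd $n$, which the paper only verifies for $n=3$ and $n=5$.

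As a proof of the conjecture, however, the attempt has exactly the gap you yourself flag: nothing is established for $k \ge 2$. Moreover, the mechanism you propose for the induction is shakier than you suggest. The scaling relation $\Delta^{k+(n-1)}(\R_n) = n\,\Delta^k(\R_n)$ is modelled on $n=3$, but even granting that every quotient is $\mathbb{Z}_n$ it does not follow: equality of the indices $[\Delta^k : \Delta^{k+(n-1)}]$ and $[\Delta^k : n\Delta^k]$ says nothing without a containment between the two sublattices. The auxiliary hypothesis that each $\Delta^k$ has full rank $n-1$ is refuted by the paper's own $\R_4$ computation, where the rank drops to $2$ from $\Delta^2$ onwards and one has $\Delta^{k+1}(\R_4) = 2\Delta^k(\R_4)$ for $k \ge 2$, not a relation with period $n-1$. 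Finally, cyclicity alone does not bound the order of $\Delta^k/\Delta^{k+1}$ by $n$; you would need a generator annihilated by $n$ at every level, i.e.\ an analogue of $\phi$ and of the additivity collapse one layer up, and no such argument is given. In short: correct and worthwhile partial results at $k=1$, but the conjecture itself remains unproved, exactly as it does in the paper.
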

\bigskip


\section{Power-associativity of quandle rings}\label{section7}
We know that any trivial quandle is associative, but an arbitrary quandle, and hence its quandle ring need not be associative. In particular,  the dihedral quandle $\R_3$, and hence all dihedral quandles $\R_n$, $n \geq 3$ are not associative. Also, it is easy to see that the conjugation quandle $\Conj(G)$ is associative if and only if $G$ is abelian, i.e. $\Conj(G)$ is a trivial quandle. For core quandles, we prove the following

\begin{proposition}
A core quandle $\Core(G)$ is associative if and only if $G$ has exponent 2.
\end{proposition}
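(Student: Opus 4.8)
The plan is to reduce associativity of the core operation to a single universally quantified identity in $G$, and then to recognise that identity as the defining condition for exponent $2$. Throughout I write the quandle operation of $\Core(G)$ as $a.b = b a^{-1} b$, following the convention of this section.

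First I would expand both bracketings directly. Using $(b a^{-1} b)^{-1} = b^{-1} a b^{-1}$, I compute
$$(a.b).c = c\,(a.b)^{-1}\,c = c\, b^{-1} a b^{-1}\, c$$
and
$$a.(b.c) = (b.c)\, a^{-1}\, (b.c) = (c b^{-1} c)\, a^{-1}\, (c b^{-1} c).$$
Associativity of $\Core(G)$ is precisely the assertion that these two expressions agree for all $a,b,c \in G$.

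Next I would strip common factors. Both sides begin with $c b^{-1}$ and end with $c$; cancelling these leaves the equivalent requirement $a b^{-1} = c a^{-1} c b^{-1}$ for all $a,b,c$, and cancelling the trailing $b^{-1}$ yields the clean condition
$$a = c a^{-1} c \quad \textrm{for all } a,c \in G.$$
Thus associativity of $\Core(G)$ is equivalent to this identity, and the remaining work is to match it with the exponent-$2$ hypothesis. For the forward direction I would specialise $a = e$, which gives $c^2 = e$ for every $c \in G$, so $G$ has exponent $2$. For the converse, if $G$ has exponent $2$ then each element is its own inverse and $G$ is abelian, whence $c a^{-1} c = c a c = a c^2 = a$; the identity $a = c a^{-1} c$ holds, and so $\Core(G)$ is associative.

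The argument is a direct computation once the cancellations are carried out carefully, so there is no genuine obstacle. The only two points deserving attention are the correct inversion of $b a^{-1} b$ in the first step, and the observation in the forward direction that substituting the identity element is exactly what collapses the universal identity $a = c a^{-1} c$ down to the condition $c^2 = e$.
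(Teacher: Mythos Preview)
Your proof is correct and follows essentially the same route as the paper: expand both bracketings to obtain $c b^{-1} a b^{-1} c$ and $c b^{-1} c a^{-1} c b^{-1} c$, cancel common factors, and reduce to an identity equivalent to exponent $2$. The paper phrases the resulting condition as $(c a^{-1})^2 = 1$, which is an immediate rewriting of your $a = c a^{-1} c$; your explicit treatment of both implications simply spells out what the paper leaves implicit.
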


\begin{proof}
Let $a, b, c \in G$. Then, in $\Core(G)$,  we have
$$
(a.b).c = c  b^{-1}  a  b^{-1}  c~\textrm{and}~ a.(b.c) = c  b^{-1}  c  a^{-1}  c  b^{-1}  c.
$$
Now, it follows that $\Core(G)$ is associative if and only if $(c  a^{-1})^2 = 1$ i.e. $G$ has exponent 2.
\end{proof}

Recall that, a ring $R$ is called {\it power-associative}  if every element of $R$ generates an associative subring of $R$ (see \cite{Albert}). If $\T$ is a trivial quandle and $R$ an associative ring, then $R[\T]$ is associative, and hence power-associative. In general, it is an interesting question to determine the conditions under which the ring $R[X]$ is power-associative.  We investigate power-associativity of quandle rings of dihedral quandles $\R_n$. The cases $n=1,2$ are obvious. For $n=3$, we prove the following result.

\begin{proposition}\label{power-ass-1}
Let $R$ be a commutative ring with unity of characteristic not equal to 2, 3 or  5. Then the quandle ring $R[\R_3]$ is not power-associative.
\end{proposition}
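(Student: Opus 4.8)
The plan is to exhibit a single element whose powers violate associativity, thereby showing $R[\R_3]$ is not power-associative. The natural candidate is a generic element $w = \alpha a_0 + \beta a_1 + \gamma a_2$ of $R[\R_3]$, but since we have a concrete multiplication table available, I would first look for a simpler witness, ideally of the form $w = a_0 + \lambda v$ for some $v \in \Delta(\R_3)$, or even a low-complexity element like $x = a_0 + a_1$. The obstruction to power-associativity is visible at degree four: the defining requirement is that $w^2 \cdot w^2 = w \cdot (w \cdot w^2)$ (equivalently that all ways of bracketing a fourth power agree), so I would target the identity $(w \cdot w) \cdot (w \cdot w) = w \cdot (w \cdot (w \cdot w))$ and show it fails.

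First I would fix notation and record the full multiplication table of $\R_3$ on the basis $\{a_0, a_1, a_2\}$, using $a_i . a_j = a_{2j-i \bmod 3}$ (since $\R_3 = \T(\mathbb{Z}/3)$ has $a_i * a_j = 2j - i$). Then for $w = \alpha_0 a_0 + \alpha_1 a_1 + \alpha_2 a_2$ I would compute $w^2 = w \cdot w$ explicitly as a linear combination of $a_0, a_1, a_2$; by axiom (Q1) the diagonal terms $a_i . a_i = a_i$ contribute $\alpha_i^2 a_i$, and the off-diagonal terms contribute symmetric combinations. Next I would compute both bracketings of the fourth power, namely $A := w^2 \cdot w^2$ and $B := w \cdot (w \cdot w^2)$, again expanding everything on the basis. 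Power-associativity forces $A = B$; I would form the difference $A - B$ and extract its coefficients.

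The key step is then to show that $A - B$ is a nonzero element of $R[\R_3]$ for a suitable choice of coefficients, and to see where the hypotheses on the characteristic enter. The coefficients of $A - B$ will be polynomials in $\alpha_0, \alpha_1, \alpha_2$ with integer coefficients; factoring these I expect to see factors involving $2$, $3$, and $5$, which is exactly why the characteristic is required to avoid these primes. The cleanest route is to specialize to a convenient element — for instance setting the $\alpha_i$ to small integers such as $(1,1,0)$ or $(1,-1,0)$ — so that $A - B$ collapses to a short explicit expression, and then argue that its leading coefficient is a unit (or at least nonzero) precisely when $\mathrm{char}(R) \notin \{2,3,5\}$. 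Since $R$ is commutative with unity, verifying nonvanishing of one coefficient suffices.

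The main obstacle I anticipate is bookkeeping rather than conceptual: the degree-four expansion $w^2 \cdot w^2$ already involves nine basis products on each side, and carrying the symbolic coefficients $\alpha_i$ through two further multiplications is error-prone. To control this I would exploit the $\mathbb{Z}/3$-symmetry of $\R_3$ (the cyclic shift $a_i \mapsto a_{i+1}$ is a quandle automorphism, inducing a ring automorphism of $R[\R_3]$) to reduce the number of independent coefficient computations, and I would work modulo $\Delta^2(\R_3)$ or use the idempotent/augmentation structure when convenient to simplify intermediate expressions. Once the single offending coefficient of $A - B$ is isolated and shown to be a nonzero element of $R$ under the characteristic hypothesis, the conclusion that $R[\R_3]$ is not power-associative is immediate.
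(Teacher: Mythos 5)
Your plan is essentially the paper's proof: since $R[\R_3]$ is commutative the cube associates automatically, so the paper takes a generic $u=\alpha a_0+\beta a_1+\gamma a_2$, expands $(u^2.u).u$ and $u^2.u^2$ on the basis, and observes that they differ. One caution on your proposed shortcut of specializing the coefficients: the witness must be chosen with care, since $w=a_0+a_1$ actually satisfies $(w^2.w).w=w^2.w^2$ (both equal $5a_0+5a_1+6a_2$), whereas $w=a_0-a_1$ gives $(w^2.w).w-w^2.w^2=6a_0+6a_1-12a_2$, which is nonzero precisely because the characteristic avoids $2$ and $3$.
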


\begin{proof}
Let $\R_3 = \{ a_0, a_1, a_2 \}$. Notice that, $R[\R_3]$ is a commutative ring, and hence $u^2.u = u.u^2$ for all $u \in R[\R_3]$. On the other hand, we show that the equality $(u^2.u).u = u^2.u^2$ does not hold for all $u \in R[\R_3]$.
Let $u=\alpha a_0+ \beta a_1+ \gamma a_2$, where $\alpha, \beta,\gamma \in R$. We compute
$$
u^2 = (\alpha^2 + 2 \beta \gamma) a_0 + (\beta^2 + 2 \alpha \gamma) a_1 + (\gamma^2 + 2 \alpha \beta) a_2.
$$
and
$$
u^2.u^2 = (\alpha^4 + 6 \beta^2 \gamma^2 + 4 \alpha \beta^3 + 4 \alpha \gamma^3 + 12 \alpha^2 \beta  \gamma) a_0 +
(\beta^4 + 6 \alpha^2 \gamma^2 + 4 \alpha^3 \beta + 4 \beta \gamma^3 + 12 \alpha \beta^2  \gamma) a_1 +
$$
$$
+(\gamma^4 + 6 \alpha^2 \beta^2  + 4 \alpha^3 \gamma + 4 \beta^3 \gamma + 12 \alpha \beta  \gamma^2) a_2.
$$

On the other hand
$$
u^2.u = (\alpha^3 + 2 \alpha \beta \gamma + \beta \gamma^2 + 2 \alpha \beta^2 + \gamma \beta^2 + 2 \alpha \gamma^2) a_0 + (\beta^3 + 2 \alpha^2 \beta + \alpha \gamma^2 + 2 \alpha \beta \gamma + \alpha^2 \gamma + 2 \beta \gamma^2) a_1 +
$$
$$
+(\gamma^3 + 2 \alpha \beta \gamma + \alpha \beta^2 + 2 \alpha^2 \gamma + \alpha^2 \beta + 2 \beta^2 \gamma) a_2,
$$
and
$$
(u^2.u).u =(\alpha^4 + 3 \alpha^2  \beta^2 + 3 \alpha^2 \gamma^2 + 3 \alpha \beta \gamma^2 + 3 \alpha \beta^2 \gamma + 6 \alpha^2 \beta \gamma  + 3 \beta \gamma^3 +  \alpha \beta^3 + 3 \beta^3 \gamma +  \alpha  \gamma^3) a_0 +
$$
$$
+(\beta^4 + 3 \alpha \beta \gamma^2  + 3 \alpha^2 \beta \gamma + 6 \alpha \beta^2 \gamma + 3 \alpha^2 \beta^2 + 3 \beta^2 \gamma^2  + 3 \alpha \gamma^3 +  \alpha^3 \beta + 3 \alpha^3 \gamma +  \beta  \gamma^3) a_1 +
$$
$$
+(\gamma^4 + 3 \alpha^2  \gamma^2 + 3 \beta^2 \gamma^2 + 3 \alpha \beta^2 \gamma + 3 \alpha^2 \beta \gamma + 6 \alpha \beta \gamma^2  + 3 \alpha \beta^3 +  \alpha^3 \gamma + 3 \alpha^3 \beta +  \beta^3  \gamma) a_2.
$$
It follows that $R[\R_3]$ is not power-associative.
\end{proof}

\begin{proposition}\label{power-ass-2}
Let $R$ be a commutative ring with unity of characteristic not equal to 2. Then $R[\R_n]$ is not power-associative for $n > 3$.
\end{proposition}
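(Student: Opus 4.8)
The plan is to exploit the following elementary necessary condition: if $R[\R_n]$ were power-associative, then for every element $u$ the subring it generates would be associative, so all fourth powers of $u$ would coincide; in particular one would have $(u^2.u).u = u^2.u^2$. Hence it suffices to produce, for each $n>3$, a single element $u \in R[\R_n]$ for which $(u^2.u).u \neq u^2.u^2$. Every computation is carried out with the dihedral multiplication $a_i.a_j = a_{2j-i}$, indices read modulo $n$, extended bilinearly to $R[\R_n]$.

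First I would dispose of $n=4$, which is the only case genuinely requiring the hypothesis on the characteristic. Taking a general element $u = \alpha a_0 + \beta a_1$ in $R[\R_4]$, a direct calculation gives $u^2 = \alpha^2 a_0 + \beta^2 a_1 + \alpha\beta(a_2 + a_3)$ and, after two further multiplications,
$$(u^2.u).u - u^2.u^2 = \alpha\beta(\alpha - \beta)^2\,(a_2 + a_3 - a_0 - a_1).$$
Choosing $\alpha = 1$ and $\beta = 2$ makes the scalar equal to $2$, so the obstruction becomes $2(a_2 + a_3 - a_0 - a_1)$, which is nonzero precisely because $2 \neq 0$ in $R$. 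This is exactly where the assumption that the characteristic is not $2$ is used; note that the na\"ive witness $u = a_0 + a_1$ fails here, since $(\alpha-\beta)^2$ vanishes at $\alpha = \beta$.

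For $n \geq 5$ I would instead use the witness $u = a_0 + a_1$, for which $\alpha = \beta = 1$ already suffices. Here $u^2 = a_0 + a_1 + a_2 + a_{n-1}$, and expanding $(u^2.u).u$ and $u^2.u^2$ by repeatedly applying $a_i.a_j = a_{2j-i}$ yields two explicit integer combinations of basis elements whose difference I claim never vanishes. For large $n$ (concretely $n \geq 10$) the two products occupy slightly different index ranges: $u^2.u^2$ contains the basis element $a_5$ with coefficient $1$ whereas $(u^2.u).u$ does not, so $a_5$ appears in the difference with coefficient $-1$ and the conclusion is immediate, in fact independently of the characteristic. The finitely many remaining cases $5 \leq n \leq 9$ are settled by the same explicit computation one value at a time.

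The hard part is precisely this last bookkeeping. Because the exponents $2j-i$ are read modulo $n$, the basis elements occurring in $u^2.u$, in $(u^2.u).u$, and in $u^2.u^2$ collide for small $n$, and one must verify that these collisions never conspire to cancel the difference entirely; once the generic index pattern is identified for large $n$ and the small cases are checked directly, the proposition follows, with the characteristic hypothesis entering only through the $n=4$ computation above.
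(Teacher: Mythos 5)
Your proposal is correct, but it takes a genuinely different and substantially heavier route than the paper. The paper tests the \emph{cube}: since $R[\R_n]$ is noncommutative for $n>3$, the single witness $u=a_0+2a_1$ already satisfies $u^2.u\neq u.u^2$ (both products are written out once, uniformly in $n$, and the coefficients of $a_0$ are compared: $5$ versus $1$, or $5$ versus $3$ when $n=4$), which immediately contradicts power-associativity. You instead test the \emph{fourth power} via $(u^2.u).u\neq u^2.u^2$, split off $n=4$ with the generic witness $u=\alpha a_0+\beta a_1$, and use $u=a_0+a_1$ for $n\geq 5$. Your $n=4$ identity $(u^2.u).u-u^2.u^2=\alpha\beta(\alpha-\beta)^2(a_2+a_3-a_0-a_1)$ checks out, as does the claim that for $n\geq 10$ the coefficient of $a_5$ in the difference is $-1$; what your approach buys is a sharper picture of where the characteristic hypothesis enters (only at $n=4$; for $n\geq 5$ your obstruction has a unit coefficient, so the conclusion there is characteristic-free, which the paper's $a_0$-coefficient comparison does not give). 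The costs are twofold. First, you leave the five collision cases $5\leq n\leq 9$ as an unexecuted ``same explicit computation''; they do all work out (e.g.\ for $n=5$ the difference is $a_2-2a_3+a_4$, and each of $n=6,\dots,9$ likewise yields a combination with some coefficient $\pm 1$), but as written this is an assertion, not a proof, and you yourself flag it as the hard part. Second, the degree-$4$ test is unnecessary for $n\geq 5$: with your own witness $u=a_0+a_1$ one already has $u^2.u-u.u^2=a_0+a_1-a_4-a_{n-3}\neq 0$, so the cube test in the style of the paper would have spared you all of the fourth-power bookkeeping, leaving only $n=4$ as the case needing a special witness.
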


\begin{proof}
Let $n > 3$ and $\R_n = \{ a_0, a_1, \ldots, a_{n-1} \}$. The product in $\R_n$ is defined by the rule
$$
a_i.a_j = a_{2j-i},
$$
where the indexes are taken by modulo $n$. Consider an element $u =  a_0 + 2 a_1 \in R[\R_n]$. Then we have
$$
u^2 =  a_0 + 4 a_1 + 2 a_2 + 2 a_{n-1},
$$
and hence
$$
u^2.u  = 5 a_0 + 8 a_1 + 2  a_2 + 8 a_{n-3} + 4 a_{n-2} + 4 a_{n-1}.
$$
On the other hand
$$
u.u^2  = a_0 + 8 a_1 + 4 a_2 + 4 a_3 + 2 a_4 + 4 a_{n-3} + 2 a_{n-2} + 2 a_{n-1}.
$$
We see that if $n > 3$, then the coefficient of $a_0$ in $u^2.u$ equals to $5$, but the coefficient of $a_0$ in $u.u^2$  equals to $1$. Hence, $u^2.u  \not= u.u^2$, and the ring $R[\R_n]$ is not power-associative.
\end{proof}

We conclude with the following questions.

\begin{question}
Let $X$ and $Y$ be two racks such that $R[X]\cong R[Y]$. Does it follow that $X \cong Y$?
\end{question}

\begin{question}
Let $X$ and $Y$ be two racks with  $\mathcal{X}_R(X) \cong \mathcal{X}_R(Y)$. Does it follow that $X \cong Y$?
\end{question}

\bigskip

\medskip \noindent \textbf{Acknowledgement.} 
The results given in sections \ref{section3}, \ref{section6} and \ref{section7} are supported by the Russian Science Foundation grant 16-41-02006, and the results in \ref{section4} and \ref{section5} are supported by the DST grant INT/RUS/RSF/P-2 and SERB MATRICS grant.
\bigskip

\end{document}